\newtheorem{thm}{Theorem}[section]
\newtheorem{lem}[thm]{Lemma}
\newtheorem{prop}[thm]{Proposition}
\newtheorem{corol}[thm]{Corollary}
\theoremstyle{definition}
\newtheorem{definition}[thm]{Definition}
\newtheorem{rem}[thm]{Remark}
\newtheorem{example}[thm]{Example}
\date{}
\begin{document}

\title{Characterization of Filippov representable maps \\and Clarke subdifferentials}
\author{Mira BIVAS
\and Aris DANIILIDIS
\and Marc QUINCAMPOIX }
\maketitle

\begin{abstract}
The ordinary differential equation $\dot{x}(t)=f(x(t)), \; t \geq 0 $, for $f$ measurable, is not sufficiently regular to guarantee existence of solutions. To remedy this we may
relax the problem by replacing the function $f$ with its Filippov
regularization $F_{f}$ and consider the differential inclusion $\dot{x}(t)\in
F_{f}(x(t))$ which always has a solution. It is interesting to know,
inversely, when a set-valued map $\Phi$ can be obtained as the Filippov
regularization of a (single-valued, measurable) function. In this work we give
a full characterization of such set-valued maps, hereby called Filippov
representable. This characterization also yields an elegant description of
those maps that are Clarke subdifferentials of a Lipschitz function.

\end{abstract}

\medskip

\textbf{Keywords}: Filippov regularization, Krasovskii regularization,
Differential inclusion,\\ cusco map, Clarke subdifferential.

\bigskip

\textbf{AMS Classification}: 34A60, 28A10, 28A20, 26E25, 58C06

\section{Introduction}

We consider the differential equation
\begin{equation}
\dot{x}(s)=f(x(s)),\ s\geq0\,,\ x(0)=x_{0}\,, \label{eq_difeq}%
\end{equation}
where $f:\mathbb{R}^{d}\longrightarrow\mathbb{R}^{d}$  is a bounded measurable
function and $ x_0 \in \mathbb{R}^{d}$  . The above Cauchy problem might have no solution due to the lack of
regularity of $f$. A way to overcome this difficulty is to replace
\eqref{eq_difeq} by a "minimal" differential inclusion which is sufficiently
regular to have a solution. A natural way to do this is to replace $f$ by
its Krasovskii regularization $K_{f}$ given by
\[
K_{f}(x):=\bigcap_{\delta>0}\overline{\text{co}}\,f(B_{\delta}(x))
\]
and obtain, accordingly:
\begin{equation}
\dot{x}(s)\in K_{f}(x(s)),\ x(0)=x_{0},\ s\geq0. \label{eq_krassol}%
\end{equation}
Another possibility is to consider, instead of $K_{f}$, the Filippov
regularization $F_{f}$ of $f$ given by%
\[
F_{f}(x):=\bigcap_{\mathcal{L}(N)=0}\bigcap_{\delta>0}\,\overline{\text{co}}\,f((B_{\delta}(x))\setminus N),
\]
where the first intersection is taken over the sets $N\subset\mathbb{R}^{d}$
with Lebesgue measure $\mathcal{L}(N)$ equal to zero. In this way, we obtain
the so-called Filippov solutions of \eqref{eq_difeq}, that is, solutions of
the differential inclusion
\begin{equation}
\dot{x}(s)\in F_{f}(x(s)),\ x(0)=x_{0},\ s\geq0. \label{eq_filsol}
\end{equation}
The Filippov regularization is based on the idea that sets of measure zero
should play no role in the relaxed dynamics.\smallskip

Inclusions \eqref{eq_krassol} and \eqref{eq_filsol} always have a solution,
since the set-valued mappings $K_{f}$ and $F_{f}$ are upper semicontinuous,
with nonempty convex compact values (\emph{c.f.} \cite{AubinCellina},
 \cite{Deimling}). For simplicity, borrowing terminology from \cite{BMW1997}, \cite{BMS1999}, we shall refer to such set-valued mappings as
\emph{cusco} maps (see forthcoming Definition~\ref{Def-cusco}). If the
function $f$ is continuous, then both maps $K_{f}$ and $F_{f}$ are
single-valued and equal to $f$.\smallskip

The techniques of Krasovskii and Filippov regularizations were introduced for
obtaining solutions of discontinuous differential equations. Both
regularizations have further been widely used in optimal control and
differential games, see \cite{Bac}, \cite{Cor}, \cite{Fil1960},
\cite{Hajek}, \cite{KrasSub}, \cite{Utkin}, \cite{Sontag} \emph{e.g.}
\smallskip

The main goal of this paper is to consider the inverse problem: given a cusco
set-valued mapping $F$ from $\mathbb{R}^{d}$ to $\mathbb{R}^{d}$, does there
exist a singe-valued function $f$, such that $F$ is the Krasovskii / Filippov
regularization of $f$? We shall refer to such maps
as Krasovskii representable (respectively, Filippov representable). Notice that "being cusco" is clearly a necessary
condition for being representable. We completely characterize Filippov representable maps, even in a slightly more general setting, namely, for maps defined in $\mathbb{R}^{d}$ with values in $\mathbb{R}^{\ell}$.

The other main contribution of this work is an equivalent characterization of the set-valued maps that are Clarke subdifferentials of a Lipschitz function in the finite-dimensional case. We show that these maps are exactly the Filippov regularizations of functions satisfying a so-called \emph{nonsmooth Poincar\'{e} condition}. This condition is recently stated and used independently in \cite{Flores-Master} and \cite{CK2017} for a different purpose. We refer to \cite{BMS1999} for another characterization of set-valued maps that are Clarke subdifferentials of a Lipschitz function in Banach spaces.

\smallskip

The manuscript is organized as follows: In Section~\ref{sec:2} we introduce
basic notation and background for Krasovskii and Filippov regularizations. In
Section~\ref{sec:3} we obtain several key results for both regularizations,
while in Section~\ref{sec:4} we provide the main result (characterization of
Filippov representability) and use it to obtain an alternative
characterization of those set-valued maps that are Clarke subdifferentials of
Lipschitz functions (Section~5).

\section{Preliminaries}

\label{sec:2}Throughout the paper, we denote by $B_{X}$ (respectively,
$\bar{B}_{X}$) the open (respectively, closed) unit ball, centered at the
origin of the normed space $X$. The index will often be omitted if there is no
ambiguity about the space. In this case, we denote by $B_{\delta}(x):=x+\delta
B_{X}$ the (open) ball centered at $x$ with radius $\delta.$ We also denote by
$\mathcal{L}_{d}$ the Lebesgue measure in $\mathbb{R}^{d}$ and by
$\mathcal{N}_{d}$ the set of $\mathcal{L}_{d}$-null subsets of $\mathbb{R}%
^{d},$ that is,
\[
\mathcal{N}_{d}=\{N\subset\mathbb{R}^{d}:\,\mathcal{L}_{d}(N)=0\}.
\]
We shall also omit the index $d$ and simply write $\mathcal{L}$ for the
Lebesgue measure and $\mathcal{N}$ for the family of null sets, whenever there
is no ambiguity about the dimension.\smallskip

For a set-valued mapping $\Phi$ from $\mathbb{R}^{d}$ to the subsets of
$\mathbb{R}^{\ell}$, we will use the notation $\Phi:\mathbb{R}^{d}%
\rightrightarrows\mathbb{R}^{\ell}$, while a (single-valued) function will be
denoted by $f:\mathbb{R}^{d}\longrightarrow\mathbb{R}^{\ell}$. The following
definition provides a convenient abbreviation for several statements in the sequel.

\begin{definition}
[Cusco map]\label{Def-cusco}An upper semi-continuous set-valued map
$\Phi:\mathbb{R}^{d}\rightrightarrows\mathbb{R}^{\ell}$ with nonempty compact
convex values will be called cusco.
\end{definition}

Under the above terminology, the Krasovskii regularization $K_{f}$ is the
smallest cusco map $\Phi$ satisfying $f(x)\in\Phi(x)$ for {all} $x\in
\mathbb{R}^{d}$ and the Filippov regularization $F_{f}$ is the smallest cusco map $\Psi$ satisfying $f(x)\in\Psi(x)$ for {\it almost all} $x\in \mathbb{R}^{d}$. We refer the reader to \cite{Fil1960}, \cite{Fil1988} and \cite{BOQ2009} for more information on Filippov's regularization and its applications. We also refer to \cite{BMS1999}, \cite{BMW1997} for properties of cusco maps.\smallskip

We shall also need the following classical notion of a point of approximate
continuity of a measurable function.

\begin{definition}
[Points of approximate continuity]\label{Def-approx-cont}Let $f:\mathbb{R}^{d}\rightarrow\mathbb{R}^{\ell}$ be a measurable function. A point $x\in\mathbb{R}^{d}$ is called a point of approximate continuity for $f$ if for every $\varepsilon>0$ it holds:
\begin{equation}
\lim_{\delta\rightarrow0^{+}}\frac{\mathcal{L}\{x^{\prime}\in B_{\delta
}(x),\;|f(x^{\prime})-f(x)|\geq\varepsilon\}}{\mathcal{L}(B_{\delta}(x))}=0.
\label{ac}
\end{equation}

\end{definition}

It is well-known that the complement $\mathbf{N}_{f}$ of the set of points of
approximate continuity of a locally bounded measurable $f:\mathbb{R}%
^{d}\rightarrow\mathbb{R}^{\ell}$ is $\mathcal{L}_{d}$-null (\emph{c.f.}
\cite{EG} e.g.). Based on this result we can establish the following useful lemma.

\begin{lem}
\label{lem1} Let $f:\mathbb{R}^{d}\rightarrow\mathbb{R}^{\ell}$ be a (locally)
bounded measurable function and $\mathbb{R}^{d}\diagdown\mathbf{N}_{f}$ be the
set of points of approximate continuity. Then for every $\bar{x}\in
\mathbb{R}^{d}$, $\delta>0$ and $N\in\mathcal{N}$ we have:
\begin{equation}
f(B_{\delta}(\bar{x})\setminus\mathbf{N}_{f})\subset\overline
{f(B_{\delta}(\bar{x})\setminus(\mathbf{N}_{f}\cup N))}
\quad\text{and\quad}\overline{\mathrm{co}}\,f(B_{\delta
}(\bar{x})\!\setminus\!\mathbf{N}_{f})=\overline{\mathrm{co}}
\,\left(f(B_{\delta}(\bar{x})\setminus(\mathbf{N}_{f}\cup
N))\right). \label{eq:L1-i}
\end{equation}

Consequently, for every $\bar{x}\in\mathbb{R}^{d}$ and $\delta>0$ it holds:
\begin{equation}
\overline{\mathrm{co}}\mathrm{\,\,}f(B_{\delta}(\bar{x}
)\mathbf{\diagdown}\mathbf{N}_{f})=
\bigcap_{N\in\mathcal{N}}
\overline{\mathrm{co}}\mathrm{\,\,}f(B_{\delta}(\bar{x})
\mathbf{\diagdown}N). \label{eq:L1-ii}
\end{equation}

\end{lem}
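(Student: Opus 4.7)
The plan is to concentrate essentially all the work on the first inclusion in \eqref{eq:L1-i}; the equality of closed convex hulls is then a formal consequence, and \eqref{eq:L1-ii} follows by combining \eqref{eq:L1-i} with the monotonicity of $\overline{\mathrm{co}}\,f(\cdot)$ in its set-argument, together with the fact (quoted just before the lemma) that $\mathbf{N}_f\in\mathcal{N}$.

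For the first inclusion, fix $\bar x$, $\delta>0$, $N\in\mathcal{N}$ and let $x\in B_\delta(\bar x)\setminus\mathbf{N}_f$; I want to show $f(x)\in\overline{f(B_\delta(\bar x)\setminus(\mathbf{N}_f\cup N))}$. Given $\varepsilon>0$, choose $\delta_0>0$ with $B_{\delta_0}(x)\subset B_\delta(\bar x)$. Since $x$ is a point of approximate continuity of $f$, \eqref{ac} together with the equality $\mathcal{L}(B_{\delta'}(x))=\mathcal{L}\{x'\in B_{\delta'}(x):|f(x')-f(x)|<\varepsilon\}+\mathcal{L}\{x'\in B_{\delta'}(x):|f(x')-f(x)|\geq\varepsilon\}$ forces, for all sufficiently small $\delta'\in(0,\delta_0)$,
\[
\mathcal{L}\bigl\{x'\in B_{\delta'}(x):|f(x')-f(x)|<\varepsilon\bigr\}>0.
\]
This set of positive Lebesgue measure cannot be contained in the null set $\mathbf{N}_f\cup N$, so it contains a point $x'\in B_\delta(\bar x)\setminus(\mathbf{N}_f\cup N)$ with $|f(x')-f(x)|<\varepsilon$. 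Letting $\varepsilon\searrow 0$ yields $f(x)\in\overline{f(B_\delta(\bar x)\setminus(\mathbf{N}_f\cup N))}$, which is the desired inclusion.

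For the equality of closed convex hulls in \eqref{eq:L1-i}, the inclusion ``$\supset$'' is immediate because $B_\delta(\bar x)\setminus(\mathbf{N}_f\cup N)\subset B_\delta(\bar x)\setminus\mathbf{N}_f$. The reverse inclusion ``$\subset$'' follows by applying $\overline{\mathrm{co}}$ to the set inclusion just proved and using the standard identity $\overline{\mathrm{co}}(\overline{A})=\overline{\mathrm{co}}(A)$ for any $A\subset\mathbb{R}^\ell$.

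Finally, for the consequence \eqref{eq:L1-ii}: since $\mathbf{N}_f\in\mathcal{N}$, the left-hand side already appears as one term of the intersection on the right, giving ``$\supset$''. Conversely, for any $N\in\mathcal{N}$, the equality just established together with the monotonicity $B_\delta(\bar x)\setminus(\mathbf{N}_f\cup N)\subset B_\delta(\bar x)\setminus N$ yields
\[
\overline{\mathrm{co}}\,f(B_\delta(\bar x)\setminus\mathbf{N}_f)=\overline{\mathrm{co}}\,f(B_\delta(\bar x)\setminus(\mathbf{N}_f\cup N))\subset\overline{\mathrm{co}}\,f(B_\delta(\bar x)\setminus N),
\]
so the left-hand side is contained in the intersection, giving ``$\subset$''. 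The only genuinely nontrivial step is the approximate-continuity argument in the second paragraph, where one must be careful to measure sets rather than invoke pointwise continuity; everything else is bookkeeping with closures, convex hulls, and null sets.
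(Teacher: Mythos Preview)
Your argument is correct and follows essentially the same route as the paper's own proof: both fix a point of approximate continuity $x\in B_\delta(\bar x)\setminus\mathbf{N}_f$, use \eqref{ac} to produce a set of positive measure where $f$ is $\varepsilon$-close to $f(x)$, excise the null set $\mathbf{N}_f\cup N$, and then invoke $\overline{\mathrm{co}}(\overline{A})=\overline{\mathrm{co}}(A)$ for the convex-hull equality. You merely spell out the derivation of \eqref{eq:L1-ii} in a bit more detail than the paper, which dispatches it with ``follows directly from \eqref{eq:L1-i}''.
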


\begin{proof} Let us prove \eqref{eq:L1-i}. Fix $\varepsilon>0$,
$N\in\mathcal{N}$ and $x\in B_{\delta}(\bar{x})\diagdown\mathrm{\,\,}N_{f}$.
Take $\delta_{1}<\delta$ such that $B_{\delta_{1}}(x)\subset B_{\delta}
(\bar{x})$. By \eqref{ac}, there exists $\delta_{2}\in(0,\delta_{1})$ such
that
\[
\frac{\mathcal{L}\{x^{\prime}\in B_{\delta_{2}}(x),\;|f(x^{\prime})-f(x)|\geq\varepsilon\}}{\mathcal{L}(B_{\delta_{2}}(x))}<1,
\]
which yields
\[
\mathcal{L}\{x^{\prime}\in B_{\delta_{2}}(x),\;|f(x^{\prime})-f(x)|<\varepsilon\}>0.
\]
Thus
\[
\mathcal{L}(\{x^{\prime}\in B_{\delta_{2}}(x),\;|f(x^{\prime}
)-f(x)|<\varepsilon\}\diagdown\mathrm{\,\,}(N_{f}\cup N))>0.
\]
Hence there exists $x^{\prime}\in B_{\delta_{2}}(x)\diagdown\mathrm{\,\,}
(N_{f}\cup N))\subset B_{\delta}(\bar{x})\diagdown\mathrm{\,\,}(N_{f}\cup N))$
such that $|f(x^{\prime})-f(x)|<\varepsilon$. Since $\varepsilon$ is arbitrary
we deduce
\[
f(x)\in\overline{f(B_{\delta}(\bar{x})\diagdown\mathrm{\,\,}(N_{f}\cup N))}.
\]
The right-hand side of \eqref{eq:L1-i} follows from the fact that for every
subset $A$ of $\mathbb{R}^{\ell}$ we have
\[
A\subset\mathrm{co}(A)\implies\overline{A}\subset\overline{\text{co}
}(A)\implies\overline{\text{co}}(\overline{A})=\overline{\text{co}}(A).
\]
Assertion \eqref{eq:L1-ii} follows directly from \eqref{eq:L1-i}.
\end{proof}

We recall the following result due to Castaing (see \cite[Theorem~8.1.4]{AubinFrankowska91} e.g.)

\begin{prop}
\label{selec}Let $\Phi:\mathbb{R}^{d}\rightrightarrows\mathbb{R}^{\ell}$ be a
measurable set-valued map. Then there exists a sequence of measurable
selections $\{f_{n}\}_{n=1}^{\infty}$ of $\Phi$ such that
\[
\Phi(x)=\overline{\{f_{n}(x)\ |\ n\in\mathbb{N}\}},\quad\text{for all }
x\in\mathbb{R}^{d}.
\]

\end{prop}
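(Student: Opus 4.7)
The plan is to build the countable family $\{f_n\}$ by assembling measurable selections of suitably truncated restrictions of $\Phi$. First, I would fix a countable dense subset $D=\{q_k\}_{k\in\mathbb{N}}$ of $\mathbb{R}^{\ell}$ (for instance $\mathbb{Q}^{\ell}$) and, for each pair $(k,m)\in\mathbb{N}\times\mathbb{N}$, introduce the set
$$A_{k,m}:=\{x\in\mathbb{R}^{d}\,:\,\Phi(x)\cap\overline{B}(q_k,1/m)\neq\emptyset\}.$$
By measurability of $\Phi$, every $A_{k,m}$ is a measurable subset of $\mathbb{R}^{d}$. On each $A_{k,m}$ I would consider the truncated set-valued map $\Phi_{k,m}(x):=\Phi(x)\cap\overline{B}(q_k,1/m)$, which has nonempty closed values and inherits measurability from $\Phi$.

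Next, I would invoke a classical measurable selection theorem (Kuratowski--Ryll-Nardzewski) applied to $\Phi$ itself to fix a single measurable selection $f_{0}:\mathbb{R}^{d}\to\mathbb{R}^{\ell}$, and applied to each $\Phi_{k,m}$ on $A_{k,m}$ to produce measurable selections $g_{k,m}:A_{k,m}\to\mathbb{R}^{\ell}$. I would then extend each $g_{k,m}$ to a measurable selection of $\Phi$ on all of $\mathbb{R}^{d}$ by setting
$$f_{k,m}(x):=\begin{cases} g_{k,m}(x), & x\in A_{k,m},\\ f_{0}(x), & x\notin A_{k,m}.\end{cases}$$
Enumerating the countable family $\{f_{k,m}\}_{(k,m)\in\mathbb{N}^{2}}$ as $\{f_n\}_{n\in\mathbb{N}}$ yields the candidate Castaing representation.

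Finally, I would verify the identity $\Phi(x)=\overline{\{f_{n}(x):n\in\mathbb{N}\}}$ for every $x$. The inclusion $\supset$ is immediate, since each $f_{n}(x)\in\Phi(x)$ and $\Phi(x)$ is closed. For the reverse inclusion, fix $x\in\mathbb{R}^{d}$ and $y\in\Phi(x)$; given $m\in\mathbb{N}$, choose $q_{k}\in D$ with $|y-q_{k}|<1/m$. Then $y\in\Phi(x)\cap\overline{B}(q_{k},1/m)$, so $x\in A_{k,m}$ and hence $f_{k,m}(x)\in\overline{B}(q_{k},1/m)$, giving $|f_{k,m}(x)-y|\leq 2/m$. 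Letting $m\to\infty$ shows that $y$ belongs to the closure, which completes the proof.

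The main obstacle I anticipate is a purely technical one: verifying that the truncated multifunctions $\Phi_{k,m}$ are themselves measurable with nonempty closed values on $A_{k,m}$, so that the Kuratowski--Ryll-Nardzewski selection theorem applies. Once this bookkeeping is in place the rest of the argument is a straightforward density computation using the countable dense set $D$.
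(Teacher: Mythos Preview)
The paper does not give its own proof of this proposition: it is stated as a classical result due to Castaing and simply referenced to \cite[Theorem~8.1.4]{AubinFrankowska91}. Your sketch is essentially the standard proof of the Castaing representation theorem and is correct. The technical obstacle you flag --- measurability of the truncated maps $\Phi_{k,m}=\Phi\cap\overline{B}(q_k,1/m)$ on $A_{k,m}$ --- is handled by the standard fact that the intersection of a closed-valued measurable multifunction with a fixed closed set is again measurable (this is in the same chapter of Aubin--Frankowska). One small caveat: as written, the statement does not explicitly assume that $\Phi$ has nonempty closed values, but this hypothesis is present in the cited reference and is implicitly needed both for the Kuratowski--Ryll-Nardzewski theorem to apply and for the identity $\Phi(x)=\overline{\{f_n(x)\}}$ to be consistent; you rely on it when asserting that $\Phi_{k,m}$ is closed-valued and when concluding the inclusion $\supset$.
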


Combining above proposition with Lemma~\ref{lem1}, we deduce the following
useful result.

\begin{corol} \label{corFi} Let $\Phi:\mathbb{R}^{d}\rightrightarrows\mathbb{R}^{\ell}$ be
cusco. Then there exists $\mathbf{N}_{\Phi}\in\mathcal{N}_{d}$ (Lebesgue null
set) such that for every $\bar{x}\in\mathbb{R}^{d}$, $\delta>0$ and
$N\in\mathcal{N}$ we have:
\begin{equation}
\Phi(B_{\delta}(\bar{x})\!\setminus\!\mathbf{N_{\Phi}})\subset\overline{\Phi(B_{\delta}(\bar{x})\setminus(\mathbf{N}_{\Phi
	}\!\cup N))}\quad\text{and}\quad\overline{\mathrm{co}}\,\Phi(B_{\delta}(\bar{x})\!\setminus\!N_{\Phi})=\overline{\mathrm{co}}\,\left(\Phi(B_{\delta}
(\bar{x})\!\setminus\!(\mathbf{N}_{\Phi}\!\cup\! N)\right).
\label{eq:cor-i}
\end{equation}

Consequently, for every $\bar{x}\in\mathbb{R}^{d}$ and $\delta>0$ it holds:
\begin{equation}
\overline{\text{\textrm{co}}}\mathrm{\,\,}\Phi(B_{\delta}(\bar{x}
)\mathbf{\diagdown N}_{\Phi}\mathrm{\,\,})=
\bigcap_{N\in\mathcal{N}}
\overline{\text{\textrm{co}}}\mathrm{\,\,}\Phi(B_{\delta}(\bar{x}
)\mathbf{\diagdown}N). \label{eq:cor-ii}
\end{equation}

\end{corol}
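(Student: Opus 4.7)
The plan is to reduce the corollary to Lemma~\ref{lem1} by passing to a Castaing dense sequence of measurable selections. Since $\Phi$ is cusco, it is (locally) bounded, so every measurable selection $f_n$ of $\Phi$ given by Proposition~\ref{selec} is locally bounded and measurable; hence Lemma~\ref{lem1} produces a corresponding null set $\mathbf{N}_{f_n}$ (the complement of the points of approximate continuity of $f_n$). I would define
\[
\mathbf{N}_\Phi := \bigcup_{n\in\mathbb{N}} \mathbf{N}_{f_n},
\]
which is a countable union of Lebesgue null sets, hence $\mathbf{N}_\Phi\in\mathcal{N}_d$.

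Next I would establish the left-hand inclusion in \eqref{eq:cor-i}. Fix $\bar x$, $\delta>0$, $N\in\mathcal{N}$, and take $x\in B_\delta(\bar x)\setminus \mathbf{N}_\Phi$ and $y\in\Phi(x)$. By Castaing's theorem there is a subsequence $f_{n_k}(x)\to y$. Apply Lemma~\ref{lem1} to $f_{n_k}$ with the null set $N':=\mathbf{N}_\Phi\cup N$; since $\mathbf{N}_{f_{n_k}}\subset\mathbf{N}_\Phi$, we have $\mathbf{N}_{f_{n_k}}\cup N' = \mathbf{N}_\Phi\cup N$, so
\[
f_{n_k}(x) \in f_{n_k}\bigl(B_\delta(\bar x)\setminus \mathbf{N}_{f_{n_k}}\bigr) \subset \overline{f_{n_k}\bigl(B_\delta(\bar x)\setminus(\mathbf{N}_\Phi\cup N)\bigr)} \subset \overline{\Phi\bigl(B_\delta(\bar x)\setminus(\mathbf{N}_\Phi\cup N)\bigr)},
\]
where the last inclusion uses that $f_{n_k}$ is a selection of $\Phi$. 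Letting $k\to\infty$ and using closedness of the right-hand side yields $y\in\overline{\Phi(B_\delta(\bar x)\setminus(\mathbf{N}_\Phi\cup N))}$, which is the desired inclusion.

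The right-hand equality in \eqref{eq:cor-i} then follows immediately: the inclusion ``$\supset$'' is clear since $B_\delta(\bar x)\setminus(\mathbf{N}_\Phi\cup N)\subset B_\delta(\bar x)\setminus\mathbf{N}_\Phi$; for ``$\subset$'', taking closed convex hulls in the already-proven inclusion and using $\overline{\mathrm{co}}\,\overline{A}=\overline{\mathrm{co}}\,A$ gives it. For \eqref{eq:cor-ii}, the inclusion ``$\subset$'' follows by applying \eqref{eq:cor-i} (right-hand equality) and observing $\overline{\mathrm{co}}\,\Phi(B_\delta(\bar x)\setminus(\mathbf{N}_\Phi\cup N))\subset\overline{\mathrm{co}}\,\Phi(B_\delta(\bar x)\setminus N)$, and then intersecting over $N\in\mathcal{N}$; the reverse inclusion is obtained by simply specializing $N=\mathbf{N}_\Phi$ in the intersection.

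The main subtlety I expect is purely bookkeeping: Lemma~\ref{lem1} is attached to a single $f$ and its own null set $\mathbf{N}_f$, whereas we need a single universal null set that works for the whole cusco $\Phi$ and every null perturbation $N$ simultaneously. The trick is precisely the two absorptions $\mathbf{N}_{f_n}\subset\mathbf{N}_\Phi$ and $\mathbf{N}_\Phi\cup N\in\mathcal{N}$, which let us reuse Lemma~\ref{lem1} with enlarged null sets without losing anything in the conclusion. Once this is set up, the Castaing density statement $\Phi(x)=\overline{\{f_n(x)\}_n}$ transfers the single-valued statement to the set-valued one essentially for free.
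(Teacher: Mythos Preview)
Your proof is correct and follows essentially the same approach as the paper: take a Castaing sequence $\{f_n\}$, set $\mathbf{N}_\Phi=\bigcup_n\mathbf{N}_{f_n}$, and feed each $f_n$ through Lemma~\ref{lem1} with the enlarged null set $\mathbf{N}_\Phi\cup N$ to conclude via density of $\{f_n(x)\}$ in $\Phi(x)$. The only cosmetic difference is that the paper first shrinks to an auxiliary ball $B_{\delta_1}(x)\subset B_\delta(\bar{x})$ before invoking Lemma~\ref{lem1}, whereas you apply the lemma directly on $B_\delta(\bar{x})$; your version is slightly more streamlined and equally valid.
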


\begin{proof} Let $\left\{f_{n}\right\}_{n\geq1}$ be a sequence of
measurable sets associated to $\Phi$ ({\it c.f.} Proposition~\ref{selec}).
We set $\mathbf{N}_{\Phi}:=\bigcup_{k\geq1}N_{k}$, where $N_{k}=\mathbf{N}_{f_{k}}$ is the complement of the set of points of approximate continuity of $f_{k}$. We obviously have that $\mathbf{N}_{\Phi}$ is a null set. Let us show that \eqref{eq:cor-i} holds.\smallskip\newline To this end, let $N\in
\mathcal{N}$, $\bar{x}\in\mathbb{R}^{d}$ and $\delta>0$. Fix $x\in B_{\delta
}(\bar{x})\diagdown\mathrm{\,\,}N_{\Phi}$ and take $\delta_{1}\in(0,1)$ such
that $B_{\delta_{1}}(x)\subset B_{\delta}(\bar{x})$. By Lemma~\ref{lem1} we
have for any $k\geq1$,
\begin{align*}
f_{k}(x)  &  \in f_{k}(B_{\delta_{1}}(x)\diagdown\mathrm{\,\,}N_{k}%
)\subset\overline{f(B_{\delta_{1}}(\bar{x})\diagdown\mathrm{\,\,}(N_{k}\cup
N_{\Phi}\cup N))}=\overline{f(B_{\delta_{1}}(\bar{x})\diagdown\mathrm{\,\,}
(N_{\Phi}\cup N))}\\
&  \subset\overline{\Phi(B_{\delta}(\bar{x})\diagdown\mathrm{\,\,}(N_{\Phi
}\cup N))}.
\end{align*}
So
\[
\Phi(x)=\overline{\{f_{k}(x),\;k\geq1\}}\subset\overline{\Phi(B_{\delta}
(\bar{x})\diagdown\mathrm{\,\,}(N_{\Phi}\cup N))}\, ,
\]
which established the left-hand side of \eqref{eq:cor-i}. The remaining
assertions are easily deduced in a similar manner as in Lemma~\ref{lem1}.
\end{proof}

Let us now recall (see \cite[Proposition 2]{BOQ2009} \emph{e.g.}) the
following useful results. In \cite{BOQ2009}, the results below have been
stated and proved for the case $\ell=d$. The proofs for the general case
($\ell$ arbitrary) are identical. In what follows, $\mathcal{N}$ will always
denote the class of Lebesgue null sets.

\begin{prop}
\label{prop_filreg} Let $f:\mathbb{R}^{d}\rightarrow\mathbb{R}^{\ell}$ be a
measurable and (locally) bounded function. Then,
\begin{itemize}
\item[$\mathbf{(i).}$] there exists a set $\mathbf{N}_{f}\in\mathcal{N}$ such that
\[
F_{f}(x):=\bigcap_{\delta>0}\overline{\text{\textrm{co}}}\mathrm{\,\,}
\,f((B_{\delta}(x))\setminus\mathbf{N}_{f}),\quad\text{for all }x\in
\mathbb{R}^{d}
\]
and \,\,$f(x)\in F_{f}(x)$\,\, for almost all $x\in\mathbb{R}^{d}$.

\item[$\mathbf{(ii).}$] $F_{f}$ is the smallest cusco map $\Phi$ such that $f(x)\in\Phi(x),$ for
almost all $x\in\mathbb{R}^{d}$.

\item[$\mathbf{(iii).}$] $F_{f}$ is single-valued if and only if there exists a continuous
function $g$ which coincides almost everywhere with $f$. In this case,
$F_{f}(x)=\{g(x)\}$ for almost all $x\in\mathbb{R}^{d}$.

\item[$\mathbf{(iv).}$] there exists a (necessarily measurable) function $\bar{f}$ which is
equal almost everywhere to $f$ and such that
\[
F_{f}(x):=\bigcap_{\delta>0}\overline{\text{\textrm{co}}}\mathrm{\,\,}
\,\bar{f}(B_{\delta}(x)),\quad\text{for all }x\in\mathbb{R}^{d}.
\]

\item[$\mathbf{(v).}$] if a function $\widetilde{f}$ coincides with $f$ for almost all
$x\in\mathbb{R}^{d}$, then
\[
F_{f}(x)=F_{\widetilde{f}}(x),\quad\text{for all }x\in\mathbb{R}^{d}.
\]

\item[(vi).] for all $x\in\mathbb{R}^{d}$
\[
F(x)=\bigcap_{\widetilde{f}=f\text{a.e.}}\bigcap_{\delta>0}\overline{\mathrm{co }}\,\widetilde{f}(B_{\delta}(x))\,,
\]
where the first intersection is taken over all functions $\widetilde{f}$ equal to $f$ almost everywhere.
\end{itemize}
\end{prop}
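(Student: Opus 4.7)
The plan is to verify the six items in order, using Lemma~\ref{lem1}, Corollary~\ref{corFi}, and Proposition~\ref{selec} as the main tools, sequencing the arguments so that later items follow quickly from earlier ones. For $\mathbf{(i)}$, I would take $\mathbf{N}_f$ to be the complement of the set of points of approximate continuity of $f$, which is $\mathcal{L}_d$-null by the classical result cited just before Lemma~\ref{lem1}. Applying \eqref{eq:L1-ii} at any $x\in\mathbb{R}^d$ and intersecting over $\delta>0$ yields the stated alternative formula for $F_f$. The assertion $f(x)\in F_f(x)$ at every point of approximate continuity is essentially contained in the proof of Lemma~\ref{lem1}: for each $\delta,\varepsilon>0$ one produces $x'\in B_\delta(x)\setminus\mathbf{N}_f$ with $|f(x')-f(x)|<\varepsilon$, so $f(x)\in\overline{\mathrm{co}}\,f(B_\delta(x)\setminus\mathbf{N}_f)$ for every $\delta>0$.

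For $\mathbf{(ii)}$, the $\bigcap_{\delta>0}$ formula makes $F_f$ automatically upper semicontinuous with nonempty compact convex values (local boundedness supplying nonemptiness and compactness), hence cusco. For minimality, given any cusco $\Phi$ with $f(x)\in\Phi(x)$ a.e., apply Corollary~\ref{corFi} to produce $\mathbf{N}_\Phi$ and set $N_0:=\mathbf{N}_\Phi\cup\{x:f(x)\notin\Phi(x)\}$, which is null. Using \eqref{eq:L1-i} with $N=N_0$ to replace $\mathbf{N}_f$ by $\mathbf{N}_f\cup N_0$ inside the closed convex hull, then the pointwise inclusion $f(B_\delta(\bar{x})\setminus(\mathbf{N}_f\cup N_0))\subset\Phi(B_\delta(\bar{x}))$, and finally upper semicontinuity of $\Phi$, one obtains $F_f(\bar{x})\subset\bigcap_{\delta>0}\overline{\mathrm{co}}\,\Phi(B_\delta(\bar{x}))=\Phi(\bar{x})$ for every $\bar{x}$. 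Item $\mathbf{(v)}$ then drops out of $\mathbf{(ii)}$ immediately: the class of cusco maps containing $f$ a.e.\ depends only on the a.e.-class of $f$, so its minimum does too.

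For $\mathbf{(iii)}$, the ``if'' direction combines $\mathbf{(v)}$ with the fact that for continuous $g$ one has $\bigcap_{\delta>0}\overline{g(B_\delta(x))}=\{g(x)\}$, yielding $F_g(x)=\{g(x)\}$; conversely, a single-valued upper semicontinuous map into $\mathbb{R}^\ell$ is continuous, so if $F_f$ is single-valued then $g(x):=F_f(x)$ is continuous and $\mathbf{(i)}$ forces $g=f$ a.e. For $\mathbf{(iv)}$, let $\bar{f}$ be a measurable selection of $F_f$ (Proposition~\ref{selec}) chosen to coincide with $f$ off $\mathbf{N}_f$, which is possible since $f(x)\in F_f(x)$ a.e.\ by $\mathbf{(i)}$. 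Since $\bar{f}(y)\in F_f(y)$ for every $y$, upper semicontinuity of $F_f$ gives $\bigcap_{\delta>0}\overline{\mathrm{co}}\,\bar{f}(B_\delta(x))\subset F_f(x)$; the reverse inclusion follows from $\mathbf{(v)}$ ($F_f=F_{\bar f}$) together with the trivial bound $F_{\bar{f}}(x)\subset\bigcap_{\delta>0}\overline{\mathrm{co}}\,\bar{f}(B_\delta(x))$ obtained by taking $N=\emptyset$ in the defining intersection. Finally, $\mathbf{(vi)}$ combines $\mathbf{(v)}$ and $\mathbf{(iv)}$: each $\widetilde{f}=f$ a.e.\ satisfies $F_f=F_{\widetilde{f}}\subset\bigcap_{\delta>0}\overline{\mathrm{co}}\,\widetilde{f}(B_\delta(x))$, and $\mathbf{(iv)}$ exhibits one $\bar{f}$ for which equality holds.

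The principal technical obstacle is $\mathbf{(ii)}$: the remaining items reduce to one-step applications of $\mathbf{(i)}$, $\mathbf{(iv)}$, or the preceding lemmas, whereas the minimality proof requires orchestrating Lemma~\ref{lem1} (to absorb $N_0$ into $\mathbf{N}_f$ inside the closed convex hull), upper semicontinuity of $\Phi$, and the tailored choice $N_0=\mathbf{N}_\Phi\cup\{x:f(x)\notin\Phi(x)\}$ into a single clean inclusion chain.
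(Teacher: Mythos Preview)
The paper does not supply a proof of this proposition: it is quoted verbatim from \cite[Proposition~2]{BOQ2009}, with only the remark that the proofs for general $\ell$ are identical to the case $\ell=d$ treated there. There is therefore nothing in the paper to compare your argument against.

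On its own merits, your plan is correct and well organized. A couple of minor comments. In $\mathbf{(ii)}$ you invoke Corollary~\ref{corFi} to produce $\mathbf{N}_\Phi$, but this is not needed: taking simply $N_0=\{x:f(x)\notin\Phi(x)\}$, the chain
\[
F_f(\bar{x})=\bigcap_{\delta>0}\overline{\mathrm{co}}\,f\bigl(B_\delta(\bar{x})\setminus(\mathbf{N}_f\cup N_0)\bigr)\subset\bigcap_{\delta>0}\overline{\mathrm{co}}\,\Phi(B_\delta(\bar{x}))\subset\Phi(\bar{x})
\]
already closes, using only Lemma~\ref{lem1}, the a.e.\ inclusion $f\in\Phi$, and the elementary fact (later recorded as \eqref{phi1}) that a cusco map satisfies $\bigcap_{\delta>0}\overline{\mathrm{co}}\,\Phi(B_\delta(\bar{x}))\subset\Phi(\bar{x})$. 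Also, in $\mathbf{(iv)}$ you do not need Proposition~\ref{selec} to build $\bar f$: setting $\bar f=f$ off $\mathbf{N}_f$ and choosing \emph{any} value in $F_f(x)$ on the null set $\mathbf{N}_f$ already yields a measurable function (Lebesgue measure being complete), and your two inclusions then go through unchanged. Your identification of $\mathbf{(ii)}$ as the only item requiring real orchestration is accurate; the rest are indeed one-line consequences of $\mathbf{(i)}$, $\mathbf{(ii)}$, $\mathbf{(v)}$ and Lemma~\ref{lem1}.
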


\section{Cusco maps and Filippov representability}
\label{sec:3}

Before we proceed, we shall need the following classical result, whose proof
is provided for completeness. According to the terminology of Kirk
\cite{Kirk}, the result asserts the existence, for every Euclidean space, of a
countable partition that splits the family of open sets. For alternative proofs, or proofs of similar statements see \cite{Wang-2004}, \cite{DF2019}, \cite{DD2019}.

\begin{lem}
[Splitting partition]\label{partLem}There exists a partition $\{A_{n}
\}_{n=1}^{\infty}$ of $\mathbb{R}^{d}$, such that for every $n\in\mathbb{N}$
the set $A_{n}$ has a positive measure in every open subset of $\mathbb{R}
^{d}$.
\end{lem}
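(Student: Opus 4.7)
The plan is to build the partition by assembling a sequence of pairwise disjoint fat closed nowhere dense sets, at least one landing inside every basic open set of a countable base. Fix a countable base $\{B_i\}_{i\geq 1}$ of the usual topology of $\mathbb{R}^d$ (for instance open balls with rational centres and rational radii), together with a bijection $k\mapsto (i_k,n_k)$ between $\mathbb{N}$ and $\mathbb{N}\times\mathbb{N}$.

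The core step is an induction on $k$ producing pairwise disjoint closed nowhere dense sets $C_k\subset B_{i_k}$ with $\mathcal{L}(C_k)>0$. At stage $k$, the previously selected sets form the finite union $\bigcup_{j<k}C_j$, which is closed and nowhere dense, so that $V_k:=B_{i_k}\setminus\bigcup_{j<k}C_j$ is a non-empty open subset of $\mathbb{R}^d$. Inside $V_k$ one may place a fat Cantor-type subset, i.e.\ a closed nowhere dense set of strictly positive Lebesgue measure (classical construction, obtained by iteratively removing open middle intervals/cubes whose total measure is strictly less than that of a closed box contained in $V_k$); take this to be $C_k$.

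Finally I would set $A_n:=\bigcup_{\{k\,:\,n_k=n\}}C_k$ for $n\geq 2$ and let $A_1:=\mathbb{R}^d\setminus\bigcup_{n\geq 2}A_n$ absorb the remainder, so that $\{A_n\}_{n\geq 1}$ is trivially a partition of $\mathbb{R}^d$. To check the positive-measure property, fix any non-empty open $U\subset\mathbb{R}^d$ and any $n\geq 1$. Pick $B_i\subset U$ from the base and let $k$ be the unique index with $(i_k,n_k)=(i,n)$; then $C_k\subset B_{i_k}\subset U$, $\mathcal{L}(C_k)>0$, and $C_k\subset A_n$. For $n\geq 2$ the inclusion is immediate from the definition, while for $n=1$ it follows from the fact that $C_k$ is disjoint from every other $C_{k'}$ (hence from $\bigcup_{n'\geq 2}A_{n'}$). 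Consequently $\mathcal{L}(U\cap A_n)\geq \mathcal{L}(C_k)>0$, as required.

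I do not foresee a serious obstacle. The only genuinely delicate point is the non-emptiness of $V_k$ at every inductive step: it is precisely the \emph{nowhere-dense} character of each previously constructed $C_j$ (not merely its being a null set) that prevents the finite union $\bigcup_{j<k}C_j$ from swallowing the open set $B_{i_k}$. Once this is granted, the classical availability of fat Cantor subsets in arbitrary non-empty open subsets of $\mathbb{R}^d$ closes the argument.
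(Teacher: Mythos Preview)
Your proof is correct and follows essentially the same approach as the paper's: both fix a countable base of open balls, use a bijection with $\mathbb{N}\times\mathbb{N}$ to schedule the inductive placement of pairwise disjoint fat closed nowhere dense sets, and then define $A_n$ (for $n\geq 2$) as the union of the sets tagged with second coordinate $n$, with $A_1$ absorbing the remainder. Your handling of the case $n=1$ (via disjointness of the $C_k$'s) matches the paper's argument exactly.
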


\begin{proof} Consider the countable family $\mathcal{U}_{1}, \mathcal{U}
_{2}, \dots$ of open balls with rational centers and rational radii in
$\mathbb{R}^{d}$. Let
\[
b : \mathbb{N} \times\mathbb{N} \to\mathbb{N}
\]
be a bijection such that $b(1, 1) = 1$.

Using that each nonempty open set contains a closed nowhere dense set with
positive measure (\textit{e.g.} a Smith--Volterra--Cantor set, also called ``fat" Cantor set), we can choose $T_{1}\subset\mathcal{U}_{1}$ to be a nowhere
dense closed set with positive measure. Then, we construct a sequence
$\{T_{m}\}_{m=2}^{\infty}$ of disjoint closed nowhere dense sets with positive
measure such that
\begin{equation}
\text{if }m=b(k,j)\text{, then }T_{m}\subset \,\mathcal{U}_{k}\diagdown\cup_{l<m}T_{l}\,. \label{constr}
\end{equation}
This can be done since the set $\mathcal{U}_{k}\diagdown \bigcup_{l<m}T_{l}$ is open.

We now set
\[
A_{n}:=\bigcup_{k=1}^{\infty}T_{b(k,n)},\ n\geq2
\]
and
\[
A_{1}:=\mathbb{R}^{d}\setminus\bigcup_{n=2}^{\infty}A_{n}\,.
\]
It is clear that $\{A_{n}\}_{n=1}^{\infty}$ are measurable and disjoint.
Moreover, if $O$ be a nonempty open set, then there exists $k$ such that
$\mathcal{U}_{k}\subset O$. Using \eqref{constr}, we obtain that
\[
A_{n}\cap O\supset A_{n}\cap\mathcal{U}_{k}\supset T_{b(k,n)}\,,\ n\geq2
\]
and
\[
A_{1}\cap O\supset(\mathbb{R}^{d}\setminus\bigcup_{n=2}^{\infty}A_{n}
)\bigcap\mathcal{U}_{k}\supset T_{b(k,1)}\,.
\]
Hence, $\mathcal{L}(A_{n}\bigcap O)\geq\mathcal{L}(T_{b(k,n)})>0$ and
$\mathcal{L}(A_{1}\cap O)\geq\mathcal{L}(T_{b(k,1)})>0$. This completes the
proof of the lemma.

\end{proof}

We are now ready to prove the following result.

\begin{thm}
\label{filTh} Let $\Phi:\mathbb{R}^{d}\rightrightarrows\mathbb{R}^{\ell}$ be a
cusco map. Then there exists a measurable function $f:\mathbb{R}%
^{d}\rightarrow\mathbb{R}^{\ell}$ such that $\Phi$ is almost everywhere equal
to $F_{f}$ (the Filippov regularization of $f$), that is:
\[
\Phi(x)=F_{f}(x),\quad\text{for almost every }x\in\mathbb{R}^{d}.
\]

\end{thm}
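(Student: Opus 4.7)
The plan is to combine the two ingredients developed above: Castaing's representation (Proposition~\ref{selec}) gives a countable dense family of measurable selections of $\Phi$, and the splitting partition (Lemma~\ref{partLem}) supplies a countable partition of $\mathbb{R}^d$ whose members all have positive Lebesgue measure in every nonempty open set. Concretely, I would take measurable selections $\{g_n\}_{n\geq 1}$ of $\Phi$ with $\Phi(x)=\overline{\{g_n(x):n\geq 1\}}$ for every $x\in\mathbb{R}^d$, and a splitting partition $\{A_n\}_{n\geq 1}$, and set
\[
f(x):=g_n(x)\quad\text{for } x\in A_n.
\]
This $f$ is measurable (as a countable gluing of measurable functions along disjoint measurable sets) and satisfies $f(x)\in\Phi(x)$ for every $x$.

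The inclusion $F_f(x)\subset\Phi(x)$ is then immediate from Proposition~\ref{prop_filreg}(ii): $\Phi$ is itself a cusco containing $f$ pointwise (hence almost everywhere), and $F_f$ is the smallest such cusco. For the reverse inclusion $\Phi(\bar x)\subset F_f(\bar x)$ almost everywhere, I would fix $\bar x$ outside the null set $\mathbf{N}_\Phi$ provided by Corollary~\ref{corFi} and outside $\bigcup_n\mathbf{N}_{g_n}$, where $\mathbf{N}_{g_n}$ denotes the complement of the set of points of approximate continuity of $g_n$; this still leaves a conull set. Since $\Phi(\bar x)=\overline{\{g_n(\bar x):n\geq 1\}}$ and $F_f(\bar x)$ is closed, it suffices to prove $g_n(\bar x)\in F_f(\bar x)$ for every $n$. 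Using Proposition~\ref{prop_filreg}(i), this reduces to exhibiting, for every $\delta,\varepsilon>0$ and every $n$, a point $y\in A_n\cap B_\delta(\bar x)\setminus\mathbf{N}_f$ satisfying $|g_n(y)-g_n(\bar x)|<\varepsilon$; since $f=g_n$ on $A_n$, such a $y$ places $g_n(\bar x)$ in $\overline{f(B_\delta(\bar x)\setminus\mathbf{N}_f)}\subset\overline{\mathrm{co}}\,f(B_\delta(\bar x)\setminus\mathbf{N}_f)$, and intersecting over $\delta$ yields $g_n(\bar x)\in F_f(\bar x)$. The existence of $y$ is supposed to come from combining approximate continuity of $g_n$ at $\bar x$, which makes $\{|g_n-g_n(\bar x)|<\varepsilon\}$ of density $1$ at $\bar x$, with the splitting property $\mathcal{L}(A_n\cap B_\delta(\bar x))>0$.

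The main obstacle will be precisely this intersection step. The splitting partition provides $\mathcal{L}(A_n\cap B_\delta(\bar x))>0$ for every $\delta$, but no positive lower bound on the ratio $\mathcal{L}(A_n\cap B_\delta(\bar x))/\mathcal{L}(B_\delta(\bar x))$; moreover $\bar x$ lies in a unique $A_{n_0}$, so for every $n\neq n_0$ the Lebesgue density of $A_n$ at $\bar x$ is typically zero, and the naive density comparison against the full-density set $\{|g_n-g_n(\bar x)|<\varepsilon\}$ may fail. I expect to overcome this either by refining the splitting partition so that each $A_n$ is contained in a countable union of closed Lusin sets on which $g_n$ is continuous (so that the values of $f=g_n$ on $A_n$ near $\bar x$ approximate $g_n(\bar x)$ topologically rather than only in density), or by invoking Corollary~\ref{corFi} with suitably chosen null sets to bypass the pointwise density comparison in favour of a direct comparison of the closed convex hulls $\overline{\mathrm{co}}\,\Phi(B_\delta(\bar x)\setminus N)$ and $\overline{\mathrm{co}}\,f(B_\delta(\bar x)\setminus N')$.
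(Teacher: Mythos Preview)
Your plan is exactly the paper's approach: Castaing selections $\{g_n\}$, splitting partition $\{A_n\}$, the patched function $f=\sum g_n\mathbf{1}_{A_n}$, the easy inclusion $F_f\subset\Phi$ from Proposition~\ref{prop_filreg}(ii), and then the reverse inclusion via $g_n(\bar x)\in F_f(\bar x)$ for a.e.\ $\bar x$. You have also put your finger on the genuine obstacle --- approximate continuity of $g_n$ at $\bar x$ gives a density-$1$ set, while $A_n$ typically has density~$0$ at $\bar x$, so the naive intersection argument fails --- and your first proposed remedy (Lusin) is the one the paper uses.

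Concretely, the paper does not refine the partition but instead builds, via Lusin's theorem applied to every $g_n$ simultaneously, an increasing sequence $K_1\subset K_2\subset\cdots$ with $\mathcal{L}\bigl(\mathbb{R}^d\setminus\bigcup_m K_m\bigr)=0$ and each $g_n|_{K_m}$ continuous. For $\bar x\in K_{\bar m}$ one has $\mathcal{L}\bigl(A_n\cap B_\delta(\bar x)\cap K_{\bar m}\bigr)>0$ for every $\delta>0$ (since $\mathcal{L}(A_n\cap B_\delta(\bar x))>0$ and the $K_m$ exhaust up to a null set), and genuine continuity of $g_n$ on $K_{\bar m}$ --- not approximate continuity --- forces $g_n(y)$ to be close to $g_n(\bar x)$ for all $y$ in that intersection once $\delta$ is small. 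This is the step that sidesteps the density mismatch you flagged; your second suggestion (working directly with Corollary~\ref{corFi}) does not seem to give enough, because that corollary controls $\overline{\mathrm{co}}\,\Phi(B_\delta(\bar x)\setminus N)$, not $\overline{\mathrm{co}}\,f(B_\delta(\bar x)\setminus N)$, and $f$ sees only the single selection $g_n$ on $A_n$.
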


\begin{proof} In view of Proposition~\ref{selec}, there exists a sequence of
measurable selections $\{f_{n}\}_{n=1}^{\infty}$ of $\Phi$ such that
\[
\Phi(x)=\overline{\{f_{n}(x)\ |\ n\in\mathbb{N}\}},\quad\text{for every }
x\in\mathbb{R}^{d}.
\]
Let $\{A_{n}\}_{n=1}^{\infty}$ be a splitting partition of $\mathbb{R}^{d}$
given in Lemma~\ref{partLem}. We define the measurable function $f:\mathbb{R}%
^{d}\rightarrow\mathbb{R}^{d}$ as follows:
\[
f(x):=\sum_{n=1}^{\infty}f_{n}(x)\mathbf{1}_{A_{n}}(x),
\]
where $\mathbf{1}_{A}$ denotes the characteristic function of the set $A$
(equal to $1$ if $x\in A$ and to $0$ if $x\notin A$). Let
\[
F_{f}(x):=\bigcap_{N,\mathcal{L}(N)=0}\bigcap_{\delta>0}\overline{\text{co}
}\,f(B_{\delta}(x)\setminus N)
\]
be the Filippov regularization of $f$. Since $\mathcal{L}(B_{\delta}(x)\cap
A_{n})>0$ for all $n\in\mathbb{N}$ and for all $\delta>0$, we obtain that
\begin{equation}
\begin{split}
F_{f}(x)  &  \supset\bigcap_{N,\,\mathcal{L}(N)=0}\bigcap_{\delta>0}
\overline{\text{co}}\,f((B_{\delta}(x)\cap A_{n})\setminus N)\\
&  =\bigcap_{N,\,\mathcal{L}(N)=0}\bigcap_{\delta>0}\overline{\text{co}
}\,f_{n}((B_{\delta}(x)\cap A_{n})\setminus N)
\end{split}
\label{ff}
\end{equation}
for all $n\in\mathbb{N}$, $x\in\mathbb{R}^{d}$.

The next step in the proof consists in showing that the last expression in
\eqref{ff} contains $f_{n}(x)$ for almost all $x\in\mathbb{R}^{d}$. In order
to do it, we will need the following assertion.\smallskip

\textbf{Claim.} There exists a sequence of measurable sets $\{K_{m}%
\}_{m=1}^{\infty}$ such that:

\begin{enumerate}
\item $K_{1}\subset K_{2} \subset\dots K_{m} \subset\dots$

\item $\mathbb{R}^{d}=\bigcup_{m=1}^{\infty}K_{m}\cup N_{0}$, where
$\mathcal{L}(N_{0})=0$

\item the restrictions $f_{n}\vert_{K_{m}}$ are continuous for all $m, n
\in\mathbb{N}$.
\end{enumerate}

We postpone the proof of the claim at the end of this proof. Assuming the
above claim, we deduce from Lemma \ref{partLem} that for all $n\in\mathbb{N}$,
$x\in\mathbb{R}^{d}$ and $\delta>0$ it holds:%
\begin{align*}
0<  &  \mathcal{L}(B_{\delta}(x)\cap A_{n})=\mathcal{L}(B_{\delta}(x)\cap
A_{n}\cap(\mathbb{R}^{d}\setminus N_{0}))\\
=  &  \mathcal{L}\left(  B_{\delta}(x)\cap A_{n}\cap\bigcup_{m=1}^{\infty
}K_{m}\right)  =\mathcal{L}\left(  \bigcup_{m=1}^{\infty}(B_{\delta}(x)\cap
A_{n}\cap K_{m})\right) \\
=  &  \lim_{m\rightarrow\infty}\mathcal{L}(B_{\delta}(x)\cap A_{n}\cap
K_{m})\,,
\end{align*}
since $K_{m}\subset K_{m+1}$ for all $m\in\mathbb{N}$. Therefore, for some
$m_{0}\in\mathbb{N}$ sufficiently large we have
\[
\mathcal{L}(B_{\delta}(x)\cap A_{n}\cap K_{m})>0,
\]
for all $n\in\mathbb{N}$, $x\in\mathbb{R}^{d}$, $\delta>0$ and $m\geq m_{0}.$

Let us fix an arbitrary $x\notin N_{0}$. Then, $x\in K_{m_{1}}$ for some
$m_{1}\in\mathbb{N}$. Let $\bar{m}:=\max(m_{0},m_{1})$. Since $x\in K_{m}$ for
all $m\geq m_{1}$, we can continue \eqref{ff} in the following way
\[
F_{f}(x)\supset\bigcap_{N,\,\mathcal{L}(N)=0}\bigcap_{\delta>0}\overline
{\text{co}}\,f_{n}(B_{\delta}(x)\cap A_{n}\cap K_{\bar{m}}\setminus N,t)\ni
f_{n}(x)\,,
\]
where the last inclusion is due to continuity of $f_{n}|_{K_{\bar{m}}}$.

We have obtained that for all $n\in\mathbb{N}$ and for all $x\in\mathbb{R}^{d}\setminus N_{0}$
\[
F_{f}(x)\ni f_{n}(x)\,.
\]
Since the Filippov regularization $F_{f}$ is closed-valued, we obtain
\[
F_{f}(x)\supset\Phi(x)\ni f(x),\qquad\text{for all }x\in\mathbb{R}
^{d}\setminus N_{0}.
\]

We deduce from Proposition~\ref{prop_filreg} $(ii)$ that $F_{f}(x)=\Phi(x)$
for almost every $x\in\mathbb{R}^{d}$.

It remains to prove the claim about the
existence of the sequence of sets $\{K_{m}\}_{m=1}^{\infty}$. Since the
functions $f_{n}$ are measurable, due to Lusin's theorem, for every
$m,n\in\mathbb{N}$ we can find a set $K_{n,m}\subset\mathbb{R}^{d}$ such that
$f_{n}|_{K_{n,m}}$ is continuous and
\[
\mathcal{L}(\mathbb{R}^{d}\setminus K_{n,m})<\frac{1}{2^{n+m}}\,.
\]

Let us set $K_{m}^{\prime}:=\bigcap_{n=1}^{\infty}K_{n,m}$. We have that the
restrictions $f_{n}|_{K_{m}^{\prime}}$ are continuous for all $m,n\in
\mathbb{N}$ and
\[
\mathcal{L}(\mathbb{R}^{d}\setminus K_{m}^{\prime})=\mathcal{L}\left(
\bigcup_{n=1}^{\infty}(\mathbb{R}^{d}\setminus K_{n,m})\right)  \leq\sum
_{n=1}^{\infty}\mathcal{L}(\mathbb{R}^{d}\setminus K_{n,m})<\sum_{n=1}^{\infty}\frac{1}{2^{n+m}}=\frac{1}{2^{m}}\,.
\]

The inclusions $K_{1}\subset K_{2}\subset\dots K_{m}\subset\dots$ are obtained
by taking
\[
K_{m}:=\bigcap_{l\geq m}K_{l}^{\prime},\ m=1,2,\dots\,.
\]
We have that
\[
\mathcal{L}(\mathbb{R}^{d}\setminus K_{m})=\mathcal{L}\left(  \bigcup
_{l=m}^{\infty}(\mathbb{R}^{d}\setminus K_{l}^{\prime})\right)  \leq\sum
_{l=m}^{\infty}\mathcal{L}(\mathbb{R}^{d}\setminus K_{l}^{\prime})<\sum
_{l=m}^{\infty}\frac{1}{2^{l}}=\frac{1}{2^{m-1}}\,.
\]
Let us set $N_{0}:=\mathbb{R}^{d}\setminus\cup_{m=1}^{\infty}K_{m}$. Since
$\mathbb{R}^{d}\setminus K_{m+1}\subset\mathbb{R}^{d}\setminus K_{m}$, we
obtain that
\[
\mathcal{L}(N_{0})=\mathcal{L}\left(  \bigcap_{m=1}^{\infty}(\mathbb{R}%
^{d}\setminus K_{m})\right)  =\lim_{m\rightarrow\infty}\frac{1}{2^{m-1}}=0\,.
\]
The proof is complete.
\end{proof}

We also obtain the following

\begin{prop}\label{th_main}Let $\Phi:\mathbb{R}^{d}\rightrightarrows
\mathbb{R}^{\ell}$ be a cusco map. Then, there exists a measurable selection
$f:\mathbb{R}^{d}\rightarrow\mathbb{R}^{\ell}$ of $\Phi$ (that is, $f(x)\in
\Phi(x)$ for all $x\in\mathbb{R}^{d}$), such that

\begin{enumerate}

\item[$\mathbf{(i).}$] $\Phi$ is equal almost everywhere to the Filippov regularization of
$f$, that is,
\[
\Phi(x)=F_{f}(x),\quad\text{for almost all }x\in\mathbb{R}^{d}.
\]

\item[$\mathbf{(ii).}$] there exists some $\hat{f}:\mathbb{R}^{d}\rightarrow\mathbb{R}^{\ell}$  such that $\Phi$ is equal almost everywhere to
the Krasovskii regularization of $\hat f$, that is,
\[
\Phi(x)=K_{\hat{f}}(x),\quad\text{\ for almost all }x\in\mathbb{R}^{d}.
\]

\item[$\mathbf{(iii).}$] $\Phi$ is equal almost everywhere to the intersection of all Filippov
regularizations defined by functions $\widetilde{f}$ which are equal to $f$
almost everywhere, that is,
\[
\Phi(x)=\bigcap_{\widetilde{f}=f\text{a.e.}}F_{\widetilde{f}}(x),\quad
\text{for almost all }x\in\mathbb{R}^{d}.
\]

\end{enumerate}
\end{prop}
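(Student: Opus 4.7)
The plan is to derive all three items of Proposition~\ref{th_main} as direct consequences of Theorem~\ref{filTh} together with the properties of the Filippov regularization collected in Proposition~\ref{prop_filreg}. For part \textbf{(i)}, I would revisit the explicit construction used to prove Theorem~\ref{filTh}: there, the measurable function is defined by $f(x) := \sum_{n\ge 1} f_n(x)\, \mathbf{1}_{A_n}(x)$, where $\{A_n\}_{n\ge 1}$ is the splitting partition of $\mathbb{R}^d$ and each $f_n$ is a measurable selection of $\Phi$ produced by Proposition~\ref{selec}. Since $\{A_n\}_{n\geq 1}$ is a partition, for every $x \in \mathbb{R}^d$ there is a unique index $n$ with $x \in A_n$; hence $f(x) = f_n(x) \in \Phi(x)$. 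Thus the function delivered by Theorem~\ref{filTh} is already a pointwise selection of $\Phi$, and part (i) is immediate.

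For part \textbf{(ii)}, I would apply Proposition~\ref{prop_filreg}(iv) to the function $f$ obtained in (i). This furnishes a measurable function $\hat f := \bar f$, equal to $f$ almost everywhere, such that
\[
F_f(x) \;=\; \bigcap_{\delta > 0} \overline{\mathrm{co}}\; \hat f(B_\delta(x)) \;=\; K_{\hat f}(x) \qquad \text{for every } x \in \mathbb{R}^d.
\]
Combining this identity with (i) yields $K_{\hat f}(x) = F_f(x) = \Phi(x)$ for almost every $x$, which is exactly the asserted Krasovskii representation.

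For part \textbf{(iii)}, I would invoke Proposition~\ref{prop_filreg}(v): any $\widetilde f$ coinciding with $f$ almost everywhere has the same Filippov regularization, namely $F_{\widetilde f} \equiv F_f$. Hence the intersection $\bigcap_{\widetilde f = f \text{ a.e.}} F_{\widetilde f}(x)$ reduces pointwise to $F_f(x)$, and part (i) again closes the argument. I do not anticipate any genuine obstacle in this proof: the heavy lifting has been carried out in Theorem~\ref{filTh} and in Proposition~\ref{prop_filreg}, and the only mild subtlety is to notice that the particular function built in Theorem~\ref{filTh} is automatically a pointwise selection, so that no further modification is needed to ensure $f(x) \in \Phi(x)$ for every $x \in \mathbb{R}^d$.
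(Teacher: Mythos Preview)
Your proposal is correct and in fact slightly more direct than the paper's own proof. For part~(i) the paper does not simply reuse the function from Theorem~\ref{filTh}; instead it passes through Proposition~\ref{prop_filreg}(iv) to produce an auxiliary $\hat f$ with $F_{\bar f}=K_{\hat f}$ everywhere, and then redefines $\hat f$ on the null set $\mathbf{N}_{\hat f}$ of points of non-approximate-continuity so as to force $f(x)\in\Phi(x)$ for all $x$. Your observation that the explicit function $f=\sum_{n\ge 1} f_n\,\mathbf{1}_{A_n}$ built in the proof of Theorem~\ref{filTh} is already a pointwise selection of $\Phi$ (since each $f_n$ is, and $\{A_n\}$ partitions $\mathbb{R}^d$) renders this detour unnecessary. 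For part~(ii) the two arguments coincide. For part~(iii) you appeal directly to Proposition~\ref{prop_filreg}(v), which makes the intersection over $\widetilde f=f$ a.e.\ collapse to $F_f$ in one line; the paper instead sandwiches the intersection between Krasovskii regularizations and invokes Proposition~\ref{prop_filreg}(vi). Both routes are valid; yours is the shorter one.
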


\begin{proof} Using Theorem \ref{filTh}, we obtain a measurable function
$\bar{f}:\mathbb{R}^{d}\rightarrow\mathbb{R}^{\ell}$ such that $\Phi$ is equal
almost everywhere to the Filippov regularization $F_{f}$ of $\bar{f}$, that
is,
\[
\Phi(x)=\bigcap_{N,\mathcal{L}(N)=0}\bigcap_{\delta>0}\overline
{\text{co\thinspace}}\,\bar{f}(B_{\delta}(x)\setminus N),\quad\text{for almost
every }x\in\mathbb{R}^{d}.
\]

Due to Proposition~\ref{prop_filreg} $(iv)$ there exists a function $\hat
{f}:\mathbb{R}^{d}\rightarrow\mathbb{R}^{\ell}$ such that for all
$x\in\mathbb{R}^{d}$
\[
\Phi(x):=\bigcap_{\delta>0}\overline{\text{co}}\,\,\hat{f}\left(  B_{\delta
}(x)\right)  \,.
\]
Clearly at every point $x\in\mathbb{R}^{d}\diagdown\mathbf{\hat{N}}_{f}$ of
approximate continuity of $\hat{f}$ we have that $ \hat f(x)\in\Phi(x)$. So setting
$f(x)=\hat{f}(x),$ whenever $x\in\mathbb{R}^{d}\diagdown\mathbf{N}_{\hat{f}}$
and taking $f(x)$ to be any element of $\Phi(x)$ if $x\in\mathbf{N}_{\hat{f}}$,
we obtain both claims (i) and (ii).

In order to establish $(iii)$, we use $(i)$ to obtain that for all
$x\in\mathbb{R}^{d}\setminus\mathbf{N}_{\hat{f}}$
\[
\Phi(x)=\bigcap_{\delta>0}\overline{\text{co}}\,f\left(  B_{\delta}(x)\right)
\supset\bigcap_{\widetilde{f}=f\text{a.e.}}\bigcap_{\delta>0}\overline
{\text{co}}\,\widetilde{f}\left(  B_{\delta}(x)\right)  \,.
\]
At the same time we also have:
\[
\bigcap_{\widetilde{f}=f\text{a.e.}}\bigcap_{\delta>0}\overline{\text{co}
}\,f\left(  B_{\delta}(x)\right)  \supset\bigcap_{\widetilde{f}=f\text{a.e.}
}\bigcap_{N,\mathcal{L}(N)=0}\bigcap_{\delta>0}\overline{\text{co}
}\,\widetilde{f}(B_{\delta}(x)\setminus N)\,.
\]

\noindent The right-hand side is $\bigcap_{\widetilde{f}=f\text{a.e.}}F_{\widetilde{f}
}(x)$, which by Proposition~\ref{prop_filreg} $(vi)$ is equal to $F_f(x)$, for all $x\in\mathbb{R}^{d}$.\smallskip

\noindent The proof is complete.
\end{proof}

\begin{rem}
Notice that (completely) different functions may give rise to the same
Filippov regularization: Indeed, let $A\subset\mathbb{R}$ be a splitting set,
that is, $A$ and $\mathbb{R}\diagdown A$ have positive measure on every
nontrivial interval. Then both $f(x):=\mathbf{1}_{A}(x)$ and $\widetilde
{f}(x):=\mathbf{1}_{\mathbb{R}\diagdown A}(x)$ satisfy $F_{f}(x)=F_{\widetilde
{f}}(x)=[0,1]$ and at the same time $f(x)\neq\widetilde{f}(x)$ for all
$x\in\mathbb{R}$.
\end{rem}

\begin{definition}
[The map $m(\Phi)$]\label{minmap} Let $\Phi:\mathbb{R}^{d}\rightrightarrows
\mathbb{R}^{\ell}$ be a cusco map. We define the following "minimal" map:
\begin{equation}
m(\Phi)(x):=\bigcap_{N\in\mathcal{N}}\bigcap_{\delta>0}\overline
{\text{\textrm{co}}}\mathrm{\,\,}\Phi(B_{\delta}(x)\mathbf{\diagdown}
N),\quad\text{for all }x\in\mathbb{R}^{d}.\; \label{min}
\end{equation}

\end{definition}

\noindent Thanks to Corollary~\ref{corFi}, we have also
\begin{equation}
m(\Phi)(x)=\bigcap_{\delta>0}\overline{\text{\textrm{co}}}\mathrm{\,\,}
\Phi(B_{\delta}(x)\mathbf{\diagdown}\mathbf{N}_{\Phi}\mathrm{\,\,}).
\label{min-2}
\end{equation}

\begin{prop}
\label{prop_mF}Let $\Phi:\mathbb{R}^{d}\rightrightarrows\mathbb{R}^{\ell}$ be
a cusco map. Then the map $m(\Phi)$ is cusco and satisfies
\begin{align}
m(\Phi)(\bar{x})  &  \subset\bigcap_{\delta>0}\overline{\text{\textrm{co}}
}\mathrm{\,\,}\Phi(B_{\delta}(x))\subset\Phi(\bar{x}),\quad\text{for all }
\bar{x}\in\mathbb{R}^{d}\label{phi1}\\
\;m(\Phi)(\bar{x})  &  =\Phi(\bar{x}),\quad\text{for almost all }\bar{x}
\in\mathbb{R}^{d}. \label{phi2}
\end{align}

\end{prop}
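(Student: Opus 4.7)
The approach is to handle the three assertions in order, making heavy use of the equivalent representation \eqref{min-2} for $m(\Phi)$ provided by Corollary~\ref{corFi}, together with the representation of $\Phi$ as a Filippov regularization supplied by Proposition~\ref{th_main}(i).

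First, I would prove the inclusions in \eqref{phi1}. The left inclusion is immediate from Definition~\ref{minmap}: setting $N=\emptyset$ in the intersection over $\mathcal{N}$ shows $m(\Phi)(\bar{x})\subset \bigcap_{\delta>0}\overline{\mathrm{co}}\,\Phi(B_{\delta}(\bar{x}))$. For the right inclusion I would invoke the upper semicontinuity of $\Phi$: for every $\varepsilon>0$ there is $\delta>0$ with $\Phi(B_{\delta}(\bar{x}))\subset \Phi(\bar{x})+\varepsilon\bar{B}$, and since $\Phi(\bar{x})+\varepsilon\bar{B}$ is closed and convex, the closed convex hull on the left is also contained in it; letting $\varepsilon\to 0^{+}$ gives $\bigcap_{\delta>0}\overline{\mathrm{co}}\,\Phi(B_{\delta}(\bar{x}))\subset \Phi(\bar{x})$.

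Next I would verify that $m(\Phi)$ is cusco. Using \eqref{min-2}, each set $\overline{\mathrm{co}}\,\Phi(B_{\delta}(\bar{x})\setminus \mathbf{N}_{\Phi})$ is closed and convex, and by the previous step is contained in $\Phi(\bar{x})+\bar{B}$ once $\delta$ is small enough, hence bounded. Since $B_{\delta}(\bar{x})\setminus \mathbf{N}_{\Phi}$ has full measure in $B_{\delta}(\bar{x})$, in particular nonempty, these sets form a decreasing family of nonempty compact convex sets as $\delta\to 0^{+}$, and the finite intersection property ensures that $m(\Phi)(\bar{x})$ is nonempty, compact and convex. For upper semicontinuity, I would argue by contradiction: suppose $U$ is open with $m(\Phi)(\bar{x})\subset U$ but there exist $x_{n}\to \bar{x}$ and $y_{n}\in m(\Phi)(x_{n})\setminus U$. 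By the inclusion $m(\Phi)\subset \Phi$ already established, together with local boundedness of the cusco $\Phi$, the sequence $\{y_{n}\}$ is bounded, so up to a subsequence $y_{n}\to y$ with $y\notin U$ (as $\mathbb{R}^{\ell}\setminus U$ is closed). For every fixed $\delta>0$, choose $n$ large enough that $|x_{n}-\bar{x}|<\delta/2$ and set $\delta':=\delta/2$, so that $B_{\delta'}(x_{n})\subset B_{\delta}(\bar{x})$; then \eqref{min-2} gives $y_{n}\in \overline{\mathrm{co}}\,\Phi(B_{\delta}(\bar{x})\setminus \mathbf{N}_{\Phi})$. Passing to the limit in this closed set, $y$ belongs to it for every $\delta>0$, hence $y\in m(\Phi)(\bar{x})\subset U$, a contradiction.

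Finally, for \eqref{phi2} I would use Proposition~\ref{th_main}(i), which supplies a measurable selection $f$ of $\Phi$ satisfying $\Phi(x)=F_{f}(x)$ for almost every $x$. Since $f$ is a selection, $f(B_{\delta}(x)\setminus N)\subset \Phi(B_{\delta}(x)\setminus N)$ for all $x$, $\delta$, $N$, and comparing the defining formulas yields $F_{f}(x)\subset m(\Phi)(x)$ for every $x\in\mathbb{R}^{d}$. Combined with $m(\Phi)\subset \Phi$ from \eqref{phi1}, we obtain $F_{f}\subset m(\Phi)\subset \Phi$ pointwise, and since $F_{f}=\Phi$ almost everywhere, the sandwich forces $m(\Phi)=\Phi$ almost everywhere. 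The most delicate step is the upper semicontinuity argument in the second paragraph, where one must carefully arrange $\delta'$ depending on $x_{n}$ so that $B_{\delta'}(x_{n})\subset B_{\delta}(\bar{x})$ for a prescribed $\delta>0$, in order to transfer the membership $y_{n}\in m(\Phi)(x_{n})$ into a single closed set depending only on $\bar{x}$ before passing to the limit.
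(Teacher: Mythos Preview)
Your proof is correct, but the route you take for \eqref{phi2} differs from the paper's. The paper handles \eqref{phi2} by a direct elementary observation: using the representation \eqref{min-2}, if $\bar{x}\notin\mathbf{N}_{\Phi}$ then trivially $\Phi(\bar{x})\subset\Phi(B_{\delta}(\bar{x})\setminus\mathbf{N}_{\Phi})$ for every $\delta>0$, hence $\Phi(\bar{x})\subset m(\Phi)(\bar{x})$, and together with \eqref{phi1} this gives equality off $\mathbf{N}_{\Phi}$. You instead invoke Proposition~\ref{th_main}(i) (which in turn rests on Theorem~\ref{filTh} and the splitting partition of Lemma~\ref{partLem}) to obtain a selection $f$ with $F_{f}=\Phi$ a.e., and then use the sandwich $F_{f}\subset m(\Phi)\subset\Phi$. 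This is logically sound and not circular (Proposition~\ref{th_main} precedes and does not use Proposition~\ref{prop_mF}), but it imports considerably heavier machinery than needed; the paper's one-line argument via $\bar{x}\notin\mathbf{N}_{\Phi}$ is both shorter and more transparent. For \eqref{phi1} and the cusco verification your arguments are essentially the same as the paper's, with the added benefit that you spell out the upper semicontinuity of $m(\Phi)$ via a sequential contradiction, whereas the paper simply asserts it ``follows easily from its definition.''
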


\begin{proof} Fix $N\in\mathcal{N}$, $x\in\mathbb{R}^{d}$ and set
\[
G_{N}(x):=\bigcap_{\delta>0}\overline{\text{co}}\,\Phi(B_{\delta}(x)\!\setminus\!N).
\]
Being a decreasing intersection of nonempty compact convex sets, $G_{N}(x)$ is
itself a nonempty compact convex set. Notice that the family $G_{N}
(x\}_{N\in\mathcal{N}}$ has the finite intersection property. It follows from
(\ref{min}) that the map $m(\Phi)$ has nonempty convex compact values, while
from its definition it follows easily that it is also upper semicontinuous,
that is, $m(\Phi)$ is cusco.\smallskip

We now fix $\varepsilon>0$ and $\bar{x}\in\mathbb{R}^{d}$. Since $\Phi$ is
upper semicontinuous there exists $\delta>0$ such that
\[
\forall x\in B_{\delta}(\bar{x}),\,\Phi(x)\in\Phi(\bar{x})+\varepsilon B.
\]
So $\Phi(B_{\delta}(\bar{x}))\subset\Phi(\bar{x})+\varepsilon B$ and
$\overline{\text{co}}\,\Phi(B_{\delta}(\bar{x}))\subset\Phi(\bar
{x})+2\varepsilon B$ because $\Phi(\bar{x})$ is convex closed. Therefore
\[
\bigcap_{\delta>0}\overline{\text{co}}\,\Phi(B_{\delta}(\bar{x}))\subset
\Phi(\bar{x})+2\varepsilon B.
\]
Taking the intersection over all $\varepsilon>0$ we get
\[
\bigcap_{\delta>0}\overline{\text{co}}\,\Phi(B_{\delta}(\bar{x}))\subset
\bigcap_{\varepsilon>0}(\Phi(\bar{x})+2\varepsilon B)=\Phi(\bar{x}).
\]
This proves (\ref{phi1}). Let us prove (\ref{phi2}). In view of Corollary~\ref{corFi} we get from \eqref{phi1}
\begin{equation}
\forall\bar{x}\in\mathbb{R}^{d},\;m(\Phi)(\bar{x})=\bigcap_{\delta>0}
\overline{\text{co}}\,\Phi(B_{\delta}(x)\diagdown\mathrm{\,\,}N_{\Phi}
)\subset\Phi(\bar{x}). \label{F}
\end{equation}
If $\bar{x}\notin N_{\Phi}$ then
\[
\Phi(\bar{x})\subset\bigcap_{\delta>0}\Phi(B_{\delta}(x)\diagdown
\mathrm{\,\,}N_{\Phi})\subset m(\Phi)(\bar{x}).
\]
Consequently in view of (\ref{F}) we obtain \eqref{phi2} for any $\bar
{x}\notin N_{\Phi}$.
\end{proof}

\section{Characterization of Filippov representable maps}
\label{sec:4}
Let $\mathcal{\hat{C}}(\mathbb{R}^{d},\mathbb{R}^{\ell})$ be the set
of all cusco maps $\Phi:\mathbb{R}^{d}\rightrightarrows\mathbb{R}^{\ell}$. We now define on
$\mathcal{\hat{C}}(\mathbb{R}^{d},\mathbb{R}^{\ell})$  the
equivalence relation
\[
\Phi_{1}\sim\Phi_{2}\quad\Longleftrightarrow\quad\Phi_{1}(x)=\Phi_{2}(x)\text{ for almost all }x\in\mathbb{R}^{d}
\]
and the associated quotient set
\[
\mathcal{\hat{C}}(\mathbb{R}^{d},\mathbb{R}^{\ell})/_{\sim}\;\;:=\{\,[\Phi
],\;\Phi\in\mathcal{\hat{C}}(\mathbb{R}^{d},\mathbb{R}^{\ell})\;\}
\]
where
\[
\lbrack\Phi]:=\{\Psi\in\mathcal{\hat{C}}(\mathbb{R}^{d},\mathbb{R}^{\ell
}),\,\Phi\sim\Psi\;\}.
\]
We also define an order on $\mathcal{\hat{C}}(\mathbb{R}^{d},\mathbb{R}^{\ell
})$ by
\begin{equation}
\Phi_{1}\preceq\Phi_{2}\quad\Longleftrightarrow\quad\Phi_{1}(x)\subseteq
\Phi_{2}(x),\;\text{for all }x\in\mathbb{R}^{d}. \label{eq:order}
\end{equation}

\begin{lem}
[Equivalent elements in $\mathcal{\hat{C}}(\mathbb{R}^{d},\mathbb{R}^{\ell})$
]\label{equiv}For all $\Phi_{1},\Phi_{2}\in\mathcal{\hat{C}}(\mathbb{R}^{d},\mathbb{R}^{\ell})$ we have:
\[
\Phi_{1}\sim\Phi_{2}\quad\Longleftrightarrow\quad m(\Phi_{1})=m(\Phi_{2}).
\]

\end{lem}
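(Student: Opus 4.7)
The plan is to prove each direction separately, with the reverse implication being immediate from Proposition~\ref{prop_mF} and the forward implication requiring a short argument about cofinality in the intersection defining $m(\Phi)$.

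For the implication ($\Leftarrow$), assume $m(\Phi_1)=m(\Phi_2)$. By Proposition~\ref{prop_mF}~\eqref{phi2}, there exist null sets $M_1, M_2$ such that $\Phi_i(x)=m(\Phi_i)(x)$ for every $x\in\mathbb{R}^d\setminus M_i$. Then for every $x\notin M_1\cup M_2$ we obtain
\[
\Phi_1(x)=m(\Phi_1)(x)=m(\Phi_2)(x)=\Phi_2(x),
\]
so $\Phi_1\sim\Phi_2$. This direction is essentially a bookkeeping step.

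For the implication ($\Rightarrow$), assume $\Phi_1\sim\Phi_2$ and fix the null set $M:=\{x:\Phi_1(x)\neq\Phi_2(x)\}$. I would like to show that $m(\Phi_1)(x)=m(\Phi_2)(x)$ for every $x\in\mathbb{R}^d$ (not just a.e.), by exploiting the fact that the family $\mathcal{N}$ is cofinal (with respect to inclusion) with the subfamily $\{N\in\mathcal{N}:\,N\supset M\}$. Concretely, since enlarging $N$ can only shrink $\Phi_i(B_\delta(\bar{x})\setminus N)$, and hence $\overline{\text{co}}\,\Phi_i(B_\delta(\bar{x})\setminus N)$, the map $N\mapsto\overline{\text{co}}\,\Phi_i(B_\delta(\bar{x})\setminus N)$ is decreasing in $N$. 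Thus
\[
m(\Phi_i)(\bar{x})=\bigcap_{N\in\mathcal{N}}\bigcap_{\delta>0}\overline{\text{co}}\,\Phi_i(B_\delta(\bar{x})\setminus N)=\bigcap_{\substack{N\in\mathcal{N}\\ N\supset M}}\bigcap_{\delta>0}\overline{\text{co}}\,\Phi_i(B_\delta(\bar{x})\setminus N),
\]
because every $N\in\mathcal{N}$ is dominated by $N\cup M\in\mathcal{N}$ with $N\cup M\supset M$. For every $N\supset M$ we have $B_\delta(\bar{x})\setminus N\subset\mathbb{R}^d\setminus M$, where $\Phi_1$ and $\Phi_2$ coincide, so $\Phi_1(B_\delta(\bar{x})\setminus N)=\Phi_2(B_\delta(\bar{x})\setminus N)$. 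Substituting this identity into the restricted intersection yields $m(\Phi_1)(\bar{x})=m(\Phi_2)(\bar{x})$ for every $\bar{x}$.

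The main point of the argument, and the only nonobvious step, is this cofinality observation that lets us replace an arbitrary null $N$ by $N\cup M$ without changing the intersection; everything else is a direct application of the preceding results.
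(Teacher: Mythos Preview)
Your proof is correct. For the forward direction, however, your route differs from the paper's in a way worth noting. The paper uses the representation \eqref{min-2}, $m(\Phi_i)(\bar x)=\bigcap_{\delta>0}\overline{\mathrm{co}}\,\Phi_i(B_\delta(\bar x)\setminus \mathbf{N}_{\Phi_i})$, together with Corollary~\ref{corFi} to replace $\mathbf{N}_{\Phi_i}$ by $\mathbf{N}_{\Phi_1}\cup\mathbf{N}_{\Phi_2}\cup M$ without changing the closed convex hull; on that common null set $\Phi_1=\Phi_2$, and the equality follows. Your argument bypasses Corollary~\ref{corFi} entirely: you work directly with the defining intersection \eqref{min} and use the elementary cofinality observation that $\{N\in\mathcal N:\,N\supset M\}$ is cofinal in $(\mathcal N,\subset)$, together with monotonicity of $N\mapsto\overline{\mathrm{co}}\,\Phi_i(B_\delta(\bar x)\setminus N)$. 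This is more self-contained, since Corollary~\ref{corFi} rests on Castaing selections and approximate continuity, whereas your argument needs nothing beyond the definition. You also spell out the reverse implication via Proposition~\ref{prop_mF}\,\eqref{phi2}, which the paper leaves implicit.
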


\begin{proof} Let $N\in\mathcal{N}$ be such that $\Phi_{1}(x)=\Phi_{2}(x)$
for all $x\in\mathbb{R}^{d}\setminus N$. Fix $\bar{x}\in\mathbb{R}^{d}$. In
view of Corollary~\ref{corFi}, we deduce that for every $\delta>0$
\begin{align*}
\overline{\text{co}}\,\Phi_{1}(B_{\delta}(\bar{x})\setminus
N_{\Phi_{1}})  &  =\overline{\text{co}}\,\Phi_{1}(B_{\delta}(\bar{x}
)\setminus(\mathbf{N}_{\Phi_{1}}\cup\mathbf{N}_{\Phi_{2}}\cup
N)\\
&  =\overline{\text{co}}\,\Phi_{2}\left(  B_{\delta}(\bar{x})\setminus(\mathbf{N}_{\Phi_{1}}\cup\mathbf{N}_{\Phi_{2}}\cup N)\right)
=\overline{\text{co}}\,\Phi_{2}(B_{\delta}(\bar{x})\setminus
\mathbf{N}_{\Phi_{2}})
\end{align*}
because $\Phi_{1}=\Phi_{2}$ on the complement of $N$. By taking intersection
over all $\delta>0$ we obtain
\[
m(\Phi_{1})(\bar{x})=\bigcap_{\delta>0}\overline{\text{co}}\,\Phi
_{1}(B_{\delta}(\bar{x})\setminus\! N_{\Phi_{1}})=\bigcap_{\delta
>0}\overline{\text{co}}\,\Phi_{2}(B_{\delta}(\bar{x})\setminus\!
N_{\Phi_{2}})=m(\Phi_{2})(\bar{x}).
\]
The proof is complete.
\end{proof}

\begin{corol}
[minimality of $m(\Phi)$]Let $\Phi\in\mathcal{\hat{C}}(\mathbb{R}
^{d},\mathbb{R}^{\ell})$. Then $m(\Phi)\in\lbrack\Phi]$ and $m(\Phi)$ is the
minimum element in $[\Phi]$ for the order $\preceq$ defined in
\eqref{eq:order}.
\end{corol}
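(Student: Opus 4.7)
The plan is to derive both assertions from results already established in Section~\ref{sec:3}, namely Proposition~\ref{prop_mF} and Lemma~\ref{equiv}; no new construction is needed.

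First I would establish that $m(\Phi)\in[\Phi]$. Proposition~\ref{prop_mF} already tells us that $m(\Phi)$ is itself cusco, so $m(\Phi)\in\hat{\mathcal{C}}(\mathbb{R}^{d},\mathbb{R}^{\ell})$ and the equivalence class $[\Phi]$ makes sense for it. The relation $m(\Phi)\sim\Phi$ is then exactly \eqref{phi2}, which asserts $m(\Phi)(\bar x)=\Phi(\bar x)$ for almost all $\bar x\in\mathbb{R}^{d}$. So this half is immediate.

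For the minimality claim, I would take an arbitrary $\Psi\in[\Phi]$, i.e.\ $\Psi\sim\Phi$, and show $m(\Phi)\preceq\Psi$. The key observation is that Lemma~\ref{equiv} makes $m(\cdot)$ constant on equivalence classes: $\Psi\sim\Phi$ gives $m(\Psi)=m(\Phi)$ pointwise everywhere. Then applying \eqref{phi1} of Proposition~\ref{prop_mF} to $\Psi$ itself yields $m(\Psi)(\bar x)\subset\Psi(\bar x)$ for every $\bar x\in\mathbb{R}^{d}$. Chaining the two, $m(\Phi)(\bar x)=m(\Psi)(\bar x)\subset\Psi(\bar x)$ for every $\bar x$, which is exactly $m(\Phi)\preceq\Psi$ in the sense of \eqref{eq:order}.

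I do not foresee any real obstacle here: the statement is essentially a bookkeeping corollary that packages Lemma~\ref{equiv} (class-invariance of $m$) together with the pointwise bound $m(\Phi)\preceq\Phi$ from \eqref{phi1}. The only conceptual point worth emphasizing is that \eqref{phi1} holds at \emph{every} point, not merely almost everywhere, which is precisely what upgrades the almost-everywhere identification in \eqref{phi2} to a genuine pointwise minimum within the class $[\Phi]$.
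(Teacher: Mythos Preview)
Your proposal is correct and is exactly the argument the paper has in mind: the corollary is stated without proof, immediately after Lemma~\ref{equiv}, precisely because it follows by combining that lemma (class-invariance of $m$) with the everywhere inclusion \eqref{phi1} and the almost-everywhere equality \eqref{phi2} from Proposition~\ref{prop_mF}. Your emphasis that \eqref{phi1} holds at \emph{every} point is the right observation for obtaining a genuine $\preceq$-minimum.
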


The fact that every cusco map $\Phi$ is equivalent to $m(\Phi)$
and that the latter is the minimum element of $[\Phi]\ $under set-inclusion,
has an interesting consequence, see \eqref{eq:curious} in the following remark.

\begin{rem}
For every cusco map $\Phi:\mathbb{R}^{d}\rightrightarrows\mathbb{R}^{\ell}$ we
have:
\[
m(\Phi)(x)=\bigcap_{\Phi^{\prime}\sim\Phi}
\Phi^{\prime}(x),\text{\quad for all }x\in\mathbb{R}^{d}\, .
\]
This yields the following relation (which is not completely obvious at a first
glance):
\begin{equation}
\Phi(x)=\bigcap_{\Phi^{\prime}\sim\Phi}
\Phi^{\prime}(x),\quad\text{for a.e. }x\in\mathbb{R}^{d}. \label{eq:curious}
\end{equation}

\end{rem}

We are now ready to establish our main result

\begin{thm}[Characterization of Filippov representable maps]
Let $\Phi:\mathbb{R}^{d}\rightrightarrows\mathbb{R}^{\ell}$ be a cusco map.
Then $\Phi$ is Filippov representable if and only if $\Phi=m(\Phi)$ (that is,
$\Phi$ is the $\preceq$-minimal element in its equivalent class).
\end{thm}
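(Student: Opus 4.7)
My plan is to reduce the theorem to the single intermediate identity $F_f = m(F_f)$, valid for every measurable locally bounded $f:\mathbb{R}^d \to \mathbb{R}^\ell$. Granting this identity, both directions are short. If $\Phi = F_f$, then $m(\Phi) = m(F_f) = F_f = \Phi$, so $\Phi = m(\Phi)$. Conversely, if $\Phi = m(\Phi)$, Theorem~\ref{filTh} supplies a measurable $f$ with $\Phi \sim F_f$; Lemma~\ref{equiv} then gives $m(\Phi) = m(F_f)$, and the intermediate identity forces $\Phi = m(\Phi) = m(F_f) = F_f$, exhibiting $\Phi$ as Filippov representable.

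For the intermediate claim $F_f = m(F_f)$, one inclusion is essentially free: Proposition~\ref{prop_mF} gives $m(F_f)(x) \subset F_f(x)$ for every $x \in \mathbb{R}^d$. For the reverse inclusion $F_f(x) \subset m(F_f)(x)$, I would fix $x$, pick the null set $\mathbf{N}_f$ furnished by Proposition~\ref{prop_filreg}(i) (so that $f(y) \in F_f(y)$ for every $y \notin \mathbf{N}_f$), and for arbitrary $N \in \mathcal{N}$ and $\delta > 0$ observe that each $y \in B_\delta(x) \setminus (N \cup \mathbf{N}_f)$ satisfies $f(y) \in F_f(y) \subset F_f(B_\delta(x) \setminus N)$. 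Passing to closed convex hulls,
\[
\overline{\mathrm{co}}\, f\bigl(B_\delta(x) \setminus (N \cup \mathbf{N}_f)\bigr) \;\subset\; \overline{\mathrm{co}}\, F_f\bigl(B_\delta(x) \setminus N\bigr),
\]
and since the Filippov formula for $F_f(x)$ intersects over \emph{all} null sets (in particular $N \cup \mathbf{N}_f$), the left-hand side already contains $F_f(x)$. Hence $F_f(x) \subset \overline{\mathrm{co}}\, F_f(B_\delta(x) \setminus N)$, and intersecting over $N \in \mathcal{N}$ and $\delta > 0$ delivers $F_f(x) \subset m(F_f)(x)$.

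The main obstacle is conceptual rather than computational: a naive attempt to prove $F_f \subset m(F_f)$ via upper semicontinuity cannot succeed, since that route only produces the reverse (easy) inclusion already recorded in Proposition~\ref{prop_mF}. The correct idea is to exploit the fact that Filippov's construction is insensitive to enlarging the removed null set, which is precisely what allows one to promote pointwise information about $f$ on a punctured ball into information about $F_f$ on the same punctured ball without any loss in the intersection.
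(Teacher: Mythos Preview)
Your proof is correct and follows essentially the same approach as the paper: both establish the key identity $F_f = m(F_f)$ by using that $f(y)\in F_f(y)$ off the null set $\mathbf{N}_f$ to pass from $\overline{\mathrm{co}}\,f(\cdot)$ to $\overline{\mathrm{co}}\,F_f(\cdot)$, and both handle the converse via Theorem~\ref{filTh} and Lemma~\ref{equiv}. Your version is slightly more streamlined in that working directly with the ``all $N\in\mathcal{N}$'' definition of $m(\Phi)$ lets you bypass the explicit appeal to Corollary~\ref{corFi} that the paper uses to reduce $\mathbf{N}_f\cup\mathbf{N}_\Phi$ back to $\mathbf{N}_\Phi$.
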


\begin{proof} Let $\Phi:\mathbb{R}^{d}\rightrightarrows\mathbb{R}^{\ell}$ be a Filippov representable cusco map. Then
\[
\Phi(x)=F_{f}(x)=\bigcap_{\delta>0}\overline{\text{\textrm{co}}}
\mathrm{\,\,}\,f(B_{\delta}(x))\setminus\mathbf{N}_{f}),\quad\text{for all
}x\in\mathbb{R}^{d},
\]
where $f:\mathbb{R}^{d}\longrightarrow\mathbb{R}^{\ell}$ is some (bounded)
measurable function. By Lemma~\ref{lem1} we
deduce that  $$f(x)\in\Phi(x) , \; \forall x\in\mathbb{R}^{d}\backslash N_f  .$$ This together with \eqref{min-2} and Lemma~\ref{lem1} yields that for any $x \in \mathbb{R}^{d}$
\[
\Phi(x)=\bigcap_{\delta>0}\,\overline{\text{\textrm{co}}}\mathrm{\,\,}
\,f(B_{\delta}(x)\setminus\left(  \mathbf{N}_{f}\cup\mathbf{N}_{\Phi}\right)
)\subset
\bigcap_{\delta>0}\,\overline{\text{\textrm{co}}}\mathrm{\,\,}
\,\Phi(B_{\delta}(x)\setminus\left(  \mathbf{N}_{f}\cup\mathbf{N}_{\Phi}\right)
).\]
In view of Corollary~\ref{corFi}, we get
\[
\bigcap_{\delta>0}\,\overline{\text{\textrm{co}}}\mathrm{\,\,}
\,\Phi(B_{\delta}(x)\setminus\left(  \mathbf{N}_{f}\cup\mathbf{N}_{\Phi}\right)
) =
\bigcap_{\delta>0}\,\overline{\text{\textrm{co}}}\mathrm{\,\,}
\,\Phi(B_{\delta}(x)\setminus\left( \mathbf{N}_{\Phi}\right)
)
\]
which is equal to $
 m(\Phi)(x)$ by  (\ref{min-2}). This yields
 $\Phi=m(\Phi).$

 To prove the opposite direction, note that by
Theorem \ref{filTh} every cusco map $\Phi$ is equivalent to a Filippov
regularization $F_{f},$ and consequently, $F_{f}=m(F_{f})=m(\Phi)$.
\end{proof}

The following corollary follows directly.

\begin{corol}
The following assertions are equivalent for every cusco map $\Phi
:\mathbb{R}^{d}\rightrightarrows\mathbb{R}^{\ell}$:\smallskip

$\mathbf{(i).}$ $\Phi$ is a Filippov representable map ; \smallskip

$\mathbf{(ii).}$ $\Phi=m(\Phi)$ ;\smallskip

$\mathbf{(iii).}$ for every $\bar{x}\in\mathbb{R}^{d}$ and $N\in\mathcal{N}$ we have:
\[
\overline{\mathrm{co}}\,\left(  \underset{x\notin
N\,x\longrightarrow\bar{x}}{\lim\sup}\Phi(x)\right)  =\Phi(\bar{x})\,.
\]

\end{corol}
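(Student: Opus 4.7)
The equivalence (i)$\,\Leftrightarrow\,$(ii) is precisely the content of the theorem just established, so no further work is needed there. The substantive task is to prove (ii)$\,\Leftrightarrow\,$(iii), which I would reduce to the auxiliary identity
\[
\overline{\mathrm{co}}\,\Bigl(\underset{x\notin N,\,x\to\bar{x}}{\lim\sup}\,\Phi(x)\Bigr)\;=\;\bigcap_{\delta>0}\overline{\mathrm{co}}\,\Phi(B_{\delta}(\bar{x})\setminus N)\,,\qquad(\ast)
\]
valid for every $\bar{x}\in\mathbb{R}^{d}$ and every $N\in\mathcal{N}$.

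The inclusion "$\subseteq$" in $(\ast)$ is routine: since a cusco map is locally bounded, the Kuratowski outer limit in the left-hand side coincides with $\bigcap_{\delta>0}\overline{\Phi(B_{\delta}(\bar{x})\setminus N)}$, which is contained in each $\overline{\mathrm{co}}\,\Phi(B_{\delta}(\bar{x})\setminus N)$. For "$\supseteq$" I would invoke Carath\'eodory's theorem in $\mathbb{R}^{\ell}$ combined with local boundedness: any $y$ in the right-hand side admits, for each $\delta>0$, a representation $y=\sum_{i=1}^{\ell+1}\lambda_{i}^{\delta}y_{i}^{\delta}$ with $y_{i}^{\delta}\in\overline{\Phi(B_{\delta}(\bar{x})\setminus N)}$ and $(\lambda_{i}^{\delta})_{i}$ in the unit simplex. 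Letting $\delta\to 0^{+}$ and extracting convergent subsequences, each $y_{i}^{\delta}$ tends to a point of the outer limit, and $y$ appears as a convex combination of such points.

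Granted $(\ast)$, the next step is to sandwich
\[
m(\Phi)(\bar{x})\;\subseteq\;\bigcap_{\delta>0}\overline{\mathrm{co}}\,\Phi(B_{\delta}(\bar{x})\setminus N)\;\subseteq\;\Phi(\bar{x})\qquad(\dagger)
\]
for every $N\in\mathcal{N}$. The right-hand inclusion of $(\dagger)$ merely repeats the upper-semicontinuity argument from the proof of Proposition~\ref{prop_mF} (for each $\varepsilon>0$ choose $\delta>0$ with $\Phi(B_{\delta}(\bar{x}))\subseteq\Phi(\bar{x})+\varepsilon B$, pass to closed convex hulls, intersect over $\varepsilon$). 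The left-hand inclusion follows from Corollary~\ref{corFi}, which allows us to replace "$\setminus N$" by "$\setminus(N\cup\mathbf{N}_{\Phi})$" inside $\overline{\mathrm{co}}$ without altering the value; the resulting intersection over $\delta$ is then at least $\bigcap_{\delta>0}\overline{\mathrm{co}}\,\Phi(B_{\delta}(\bar{x})\setminus\mathbf{N}_{\Phi})=m(\Phi)(\bar{x})$ by \eqref{min-2}.

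With $(\ast)$ and $(\dagger)$ in hand, the equivalence (ii)$\,\Leftrightarrow\,$(iii) is a short argument: if $\Phi=m(\Phi)$ then the bracket $(\dagger)$ collapses, so each middle expression equals $\Phi(\bar{x})$, which via $(\ast)$ is exactly condition (iii); conversely, if (iii) holds then $(\ast)$ identifies each middle expression in $(\dagger)$ with $\Phi(\bar{x})$, and intersecting over all $N\in\mathcal{N}$ yields $m(\Phi)(\bar{x})=\Phi(\bar{x})$. The main obstacle is the "$\supseteq$" direction of $(\ast)$, where one must commute a closed-convex-hull operation past a decreasing intersection of possibly non-convex sets; this is precisely where Carath\'eodory's theorem and sequential compactness in $\mathbb{R}^{\ell}$ do the real work.
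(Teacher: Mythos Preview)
Your argument is correct. The paper gives no proof of this corollary beyond the phrase ``follows directly'', so your expansion via the identity $(\ast)$ and the sandwich $(\dagger)$ supplies precisely the details that are being suppressed; in particular, the Carath\'eodory-plus-compactness step for the ``$\supseteq$'' direction of $(\ast)$ is the one genuinely nontrivial point, and you handle it correctly. One minor simplification: the left-hand inclusion of $(\dagger)$ is immediate from the very definition \eqref{min} of $m(\Phi)$ as an intersection over all $N\in\mathcal{N}$ (just take your particular $N$ as one term of that intersection), so the detour through Corollary~\ref{corFi} there, while valid, is unnecessary.
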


\noindent Whenever $\Phi$ is cusco, the left-hand side of (iii) above is
always contained in $\Phi(\bar{x})$. According to (ii) above,
it is very easy to obtain explicit examples of cusco maps that are not
Filippov representable. Indeed, take any measurable function $f,$ consider its
Filippov regularization $F_{f}$ and modify it at some point $\bar{x}$ (or at
all points of a discrete set) to get an equivalent cusco map $\Phi$
different from $F_{f}$. Indeed, it is sufficient to replace $F_{f}(\bar{x})$
by any convex compact strict superset $\Phi(\bar{x})\supset F_{f}(\bar{x})$.
Then $\Phi$ is not Filippov representable, since $\Phi\neq F_{f}
=m(F_{f})=m(\Phi)$, see forthcoming examples.

\bigskip

\begin{example}
\textbf{(i).} We deduce easily that the following cusco maps, based on a
one-point modification of the minimal map $F_{f}(x)=\{0\},$ for all
$x\in\mathbb{R}$ (trivial regularization of the constant function $f\equiv0$),
cannot be obtained as Filippov regularizations:
\[
\Phi_{1}(x)=\left\{
\begin{array}
[c]{cc}
\lbrack0,1], & \text{if }x=0 \smallskip \\
\{0\}, & \text{if }x\neq0
\end{array}
\right.  \text{ \quad and \quad }\Phi_{2}(x)=\left\{
\begin{array}
[c]{cc}
\lbrack-1,1], & \text{if }x=0 \smallskip \\
\{0\}, & \text{if }x\neq 0.
\end{array}
\right.
\]
It is worth noting that $\Phi_{2}$ cannot even be a Krasovskii regularization
of a function, while $\Phi_{1}=K_{g}$, where $g(x)=0,$ for $x\neq0$ and
$g(0)=1.$\smallskip

\noindent\textbf{(ii).} A slightly more elaborated example of a function that can
neither be obtained as Filippov nor as Krasovskii regularization is the
following:
\[
\Phi_{3}(x)=\left\{
\begin{array}
[c]{cc}
\lbrack-\frac{1}{m},\frac{1}{m}], & \text{if }x=p/m\in\mathbb{Q}
\diagdown\{0\} \medskip \\
\{0\}, & \text{if }x\notin\mathbb{Q}\diagdown\{0\}.
\end{array}
\right.
\]
where every nonzero rational number is given its irreducible form $p/m,$ where
$p,m$ are relatively prime integers.\smallskip

\noindent\textbf{(iii).} Let us define the following measurable function:
\[
f(x)=\left\{
\begin{array}
[c]{cc}
\frac{1}{m}, & \text{if }x=p/m\in\mathbb{Q}\diagdown\{0\}\medskip \\
\{0\}, & \text{if }x\notin\mathbb{Q}\diagdown\{0\}.
\end{array}
\right.
\]

Then for every $x\in\mathbb{R}$ we have: $F_{f}(x)=\{0\}$ and $K_{f}
(x)=[0,f(x)].$ In particular $F_{f}\sim K_{f}$ and consequently, the cusco map
$\Phi=K_{f}$ cannot be represented as a Filippov regularization.
\end{example}

\section{Characterization of Clarke subdifferentials}

In this section we deal with the problem of determining whether a cusco map $\Phi\in\mathcal{\hat{C}}(\mathbb{R}^{d},\mathbb{R}^{d})$ is the
Clarke subdifferential of some locally Lipschitz function $\mathbb{\varphi
}:\mathbb{R}^{d}\longrightarrow\mathbb{R}$. A full characterization of such
maps has been given in \cite{BMS1999} and relevant results had been previously
established in \cite{BMW1997}. We shall complement the results of
\cite{BMS1999} by establishing, via our approach, another elegant
characterization of Clarke subdifferentials. Our method is based on the
characterization of Filippov representability (for the case $\ell=d$) together
with a \emph{nonsmooth Poincar\'{e} condition}. This latter has been recently stated
and used independently in \cite{Flores-Master} and \cite{CK2017} for a
different purpose (namely, to identify the free space of a finite-dimensional
Euclidean space). Before we proceed, let us recall the relevant statement.

\begin{thm}
[nonsmooth Poincaré condition (Proposition~3.2(ii) in \cite{CK2017})]\label{Isom} Let $\mathcal{U}\neq \emptyset$ be an open
convex subset of $\mathbb{R}^{d}$. An essentially (locally) bounded measurable
function $f:\mathcal{U}\longrightarrow\mathbb{R}^{d}$ is equal almost
everywhere to the derivative of a (locally) Lipschitz function $\varphi
:\mathcal{U}\longrightarrow\mathbb{R}$ if and only if
\begin{equation}
\partial_{i}f_{j}=\partial_{j}f_{i}\,\;\text{for all }i,j\in\{1,\ldots
,d\},\label{eq:P}
\end{equation}
where $\partial_{i}f_{j}$ denotes the partial derivative (in the sense of
distributions) of the $j$-th component of $f$ with respect to $x_{i}$. That
is, if $\mathcal{C}_{0}^{\infty}(\mathcal{U})$ is the space of compactly
supported $\mathcal{C}^{\infty}$-functions on $\mathcal{U}$ (test functions),
then \eqref{eq:P} becomes:
\[
\int_{\mathcal{U}}f_{j}(x)\frac{\partial\psi}{\partial x_{i}}(x)dx=\int
_{\mathcal{U}}f_{i}(x)\frac{\partial\psi}{\partial x_{j}}(x)dx,\quad\text{for
every }\psi\in\mathcal{C}_{0}^{\infty}(\mathcal{U}).
\]
\end{thm}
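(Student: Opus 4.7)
The plan is to establish both implications separately. The necessity direction is essentially free: if $f = \nabla \varphi$ almost everywhere for some locally Lipschitz $\varphi$, then testing against $\psi \in \mathcal{C}_0^\infty(\mathcal{U})$ and integrating by parts yields $f_j = \partial_j \varphi$ as a distribution, whence
$$\partial_i f_j = \partial_i \partial_j \varphi = \partial_j \partial_i \varphi = \partial_j f_i,$$
the middle equality being the symmetry of mixed partials for an arbitrary distribution (Schwarz in the sense of distributions).

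The substantive direction is sufficiency, and my plan is a mollification argument. Fix a basepoint $x_0 \in \mathcal{U}$ and a standard mollifier $\rho_\varepsilon$, and set $f^\varepsilon := f * \rho_\varepsilon$ on the open convex subdomain $\mathcal{U}_\varepsilon := \{x \in \mathcal{U} : \mathrm{dist}(x, \partial \mathcal{U}) > \varepsilon\}$. Since convolution commutes with distributional differentiation, the hypothesized symmetry $\partial_i f_j = \partial_j f_i$ transfers to $f^\varepsilon$ and now holds classically because $f^\varepsilon$ is smooth. As $\mathcal{U}_\varepsilon$ is convex (in particular, star-shaped with respect to $x_0$ once $\varepsilon$ is small enough), the classical Poincar\'e lemma produces a smooth potential
$$\varphi^\varepsilon(x) := \int_0^1 \bigl\langle f^\varepsilon\bigl(x_0 + t(x-x_0)\bigr),\, x - x_0\bigr\rangle\, dt,$$
normalized by $\varphi^\varepsilon(x_0) = 0$ and satisfying $\nabla \varphi^\varepsilon = f^\varepsilon$ on $\mathcal{U}_\varepsilon$.

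The next step is to pass to the limit $\varepsilon \to 0^+$. The essential local boundedness of $f$ gives a locally uniform bound on $f^\varepsilon = \nabla \varphi^\varepsilon$, which, via the segment integral, translates to a locally uniform Lipschitz bound on the family $\{\varphi^\varepsilon\}$. Together with the normalization $\varphi^\varepsilon(x_0) = 0$, Arzel\`a--Ascoli combined with a diagonal extraction over an exhaustion by compact subsets of $\mathcal{U}$ produces a subsequence $\varphi^{\varepsilon_k}$ converging locally uniformly on $\mathcal{U}$ to a locally Lipschitz $\varphi$. Simultaneously, $f^{\varepsilon_k} \to f$ in $L^1_{\mathrm{loc}}(\mathcal{U})$, so passing to the distributional limit in $\nabla \varphi^{\varepsilon_k} = f^{\varepsilon_k}$ gives $\nabla \varphi = f$ as distributions. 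Since $\varphi$ is locally Lipschitz, Rademacher's theorem ensures its distributional gradient coincides almost everywhere with its pointwise derivative, yielding $\nabla \varphi = f$ almost everywhere.

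The main obstacle I foresee is the domain issue: each $\varphi^\varepsilon$ is only defined on the shrinking subdomain $\mathcal{U}_\varepsilon$, not on $\mathcal{U}$ itself, so producing a single limit function on the whole of $\mathcal{U}$ requires a diagonal argument along an exhaustion $K_m \Subset \mathcal{U}$, using that each $K_m$ lies inside $\mathcal{U}_\varepsilon$ once $\varepsilon$ is small enough. A secondary technical point is verifying that the constants of integration are consistent across $\varepsilon$; this is handled by anchoring the normalization at the fixed basepoint $x_0 \in \mathcal{U}$.
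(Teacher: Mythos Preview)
The paper does not supply a proof of this theorem: it is quoted verbatim as Proposition~3.2(ii) of \cite{CK2017} and used as a black box in the proof of Theorem~\ref{th_Clarke}. There is therefore no in-paper argument to compare your proposal against.

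For what it is worth, your outline is the standard and correct route to this result. The necessity direction is exactly as you say (Schwarz for distributions applied to $\varphi$). For sufficiency, the mollify--apply the classical Poincar\'e lemma--pass to the limit scheme is the natural argument, and you have correctly anticipated the two technical points that require attention: the shrinking domains $\mathcal{U}_\varepsilon$ (handled by diagonal extraction over a compact exhaustion of $\mathcal{U}$) and the compatibility of the additive constants (handled by the common basepoint normalization $\varphi^\varepsilon(x_0)=0$). One small remark: the convexity of $\mathcal{U}_\varepsilon$---which you assert and which does hold, since the $\varepsilon$-erosion of a convex set is convex---is exactly what guarantees that the segment $[x_0,x]$ stays in the domain of $f^\varepsilon$, so that the line-integral formula for $\varphi^\varepsilon$ makes sense; it would be worth saying this explicitly. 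Otherwise the argument is complete.
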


We now give an elegant characterization of Clarke
subdifferentials in the spirit of this work.

\begin{thm}
[Characterization of Clarke subdifferentials]\label{th_Clarke}Let
$\Phi:\mathbb{R}^{d}\rightrightarrows\mathbb{R}^{d}$ be a cusco map. The
following are equivalent:\smallskip \newline
$\mathbf{(i).}$ \ $\Phi=\partial\mathbb{\varphi}$ for some
locally Lipschitz function $\mathbb{\varphi}:\mathbb{R}^{d}\longrightarrow
\mathbb{R}$ ;\smallskip\newline
$\mathbf{(ii).}$ $\Phi=F_{f}$ for some measurable selection $f$ of
$\Phi$ that satisfies \eqref{eq:P}.
\end{thm}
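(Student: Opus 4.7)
The plan is to establish the two implications separately, relying on Rademacher's theorem, Clarke's classical representation of the subdifferential of a Lipschitz function, and Theorem~\ref{Isom} to bridge between gradients and the Poincaré condition.

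For the implication $\mathbf{(i)}\Rightarrow\mathbf{(ii)}$, I would start from $\Phi=\partial\varphi$ for a locally Lipschitz $\varphi$. By Rademacher's theorem, $\varphi$ is differentiable outside a Lebesgue null set $D$, and $\nabla\varphi$ is (essentially) locally bounded and measurable. Define $f(x):=\nabla\varphi(x)$ for $x\notin D$ and extend $f$ on $D$ by any measurable selection of $\Phi$ (such a selection exists by Proposition~\ref{selec}); since $\nabla\varphi(x)\in\partial\varphi(x)$ wherever it exists, $f$ is a measurable selection of $\Phi$. The Poincaré condition \eqref{eq:P} for $f$ then follows from the fact that distributional mixed partials commute, applied to $\varphi$: for every test function $\psi\in\mathcal{C}_{0}^{\infty}$ one has $\int f_{j}\,\partial_{i}\psi=-\int \partial_{j}\varphi\cdot \partial_{i}\psi\cdot (-1)=\int \varphi\,\partial_{i}\partial_{j}\psi$, which is symmetric in $i,j$. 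Finally, Clarke's classical formula
\[
\partial\varphi(x)=\overline{\mathrm{co}}\bigl\{\lim_{n\to\infty}\nabla\varphi(x_{n}):x_{n}\to x,\;x_{n}\notin D\cup N\bigr\},\qquad N\in\mathcal{N},
\]
valid for every null set $N$, gives exactly $\partial\varphi(x)=F_{\nabla\varphi}(x)=F_{f}(x)$ (using Proposition~\ref{prop_filreg}(v), since $f=\nabla\varphi$ a.e.).

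For the converse implication $\mathbf{(ii)}\Rightarrow\mathbf{(i)}$, I would start from $\Phi=F_{f}$, where $f$ is a measurable selection of $\Phi$ (hence essentially locally bounded, because $\Phi$ is cusco and thus locally bounded) satisfying \eqref{eq:P}. Apply Theorem~\ref{Isom} on each open ball to produce a locally Lipschitz function $\varphi:\mathbb{R}^{d}\to\mathbb{R}$ whose classical derivative coincides a.e.\ with $f$, and glue these local potentials in the standard way (a difference of two such local potentials on the overlap is a.e.\ zero-gradient, hence a.e.\ constant, so they can be matched). Then $\nabla\varphi=f$ almost everywhere; by Proposition~\ref{prop_filreg}(v), $F_{f}=F_{\nabla\varphi}$, which, by the same Clarke representation formula invoked above, equals $\partial\varphi$. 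Hence $\Phi=\partial\varphi$.

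The routine parts are verifying measurability of the selection and the distributional symmetry; the essential ingredients that do the real work are Theorem~\ref{Isom} (which takes us from $f$ back to a potential $\varphi$) and Clarke's formula expressing $\partial\varphi$ as the Filippov regularization of $\nabla\varphi$. The most delicate step I expect is ensuring a \emph{global} potential $\varphi$ in the converse direction: Theorem~\ref{Isom} is stated on an open convex $\mathcal{U}$, so I would cover $\mathbb{R}^{d}$ by an increasing sequence of open balls, obtain local Lipschitz potentials, and reconcile them via additive constants, using that on the overlap two such potentials have the same a.e.\ gradient and are therefore locally Lipschitz with vanishing Clarke subdifferential, hence constant on each connected component.
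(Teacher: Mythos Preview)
Your proof is correct and follows essentially the same route as the paper: Rademacher plus Clarke's representation formula $\partial\varphi=F_{\nabla\varphi}$ for (i)$\Rightarrow$(ii), and Theorem~\ref{Isom} together with Proposition~\ref{prop_filreg}(v) for (ii)$\Rightarrow$(i). The gluing procedure you anticipate in the converse direction is unnecessary, since $\mathbb{R}^{d}$ is itself open and convex and Theorem~\ref{Isom} can be applied directly with $\mathcal{U}=\mathbb{R}^{d}$ to obtain a global locally Lipschitz potential $\varphi$.
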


\begin{proof} (i)$\Longrightarrow$(ii). Assume that $\Phi=\partial
\mathbb{\varphi}$ for a locally Lipschitz function $\mathbb{\varphi
}:\mathbb{R}^{d}\longrightarrow\mathbb{R}$. Then by Rademacher's theorem,
there exists $N_{\mathbb{\varphi}}\in\mathcal{N}$ such that the derivative
$\nabla\mathbb{\varphi}(x)$ exists for all $x\in\mathbb{R}^{d}\diagdown
N_{\mathbb{\varphi}}.$ For $x\in N_{\mathbb{\varphi}},$ pick $s(x)\in
\partial\mathbb{\varphi}(x)$ and set
\[
f(x)=\left\{
\begin{array}
[c]{cc}%
\nabla\mathbb{\varphi}(x), & \text{if }x\in\mathbb{R}^{d}\diagdown
N_{\mathbb{\varphi}}\\
s(x), & \text{if }x\in N_{\mathbb{\varphi}}.
\end{array}
\right.
\]
Then $f:\mathbb{R}^{d}\longrightarrow\mathbb{R}^{d}$ is a measurable selection
of $\partial\mathbb{\varphi}$ and (being a.e. equal to a gradient) it
satisfies \eqref{eq:P}, see \cite[Proposition 3.1 (ii)]{CK2017}. Moreover,
\begin{equation}
F_{f}(x):=\bigcap_{\delta>0}\,\overline{\text{\textrm{co}}}\mathrm{\,\,}
\,f(B_{\delta}(x)\setminus N_{\varphi})=\bigcap_{\delta>0}\,\overline
{\text{\textrm{co}}}\mathrm{\,\,}\left\{  \nabla\varphi(x^{\prime}):x^{\prime
}\in B_{\delta}(x)\diagdown N_{\varphi}\right\}  .\label{aa1}
\end{equation}
Since $\varphi$ is locally Lipschitz, we deduce (\cite[Chapter~2.6]{Clarke})
\begin{equation}
\bigcap_{\delta>0}\,\overline{\text{\textrm{co}}}\mathrm{\,\,}\left\{
\nabla\varphi(x^{\prime}):x^{\prime}\in B_{\delta}(x)\diagdown N_{\varphi
}\right\}  =\,\overline{\text{\textrm{co}}}\mathrm{\,}\left\{  \lim
_{x_{n}\longrightarrow x}\nabla\varphi(x_{n}):\,\{x_{n}\}\subset\mathbb{R}
^{d}\diagdown N_{\mathbb{\varphi}}\right\}  =\partial\varphi(x),\label{aa2}
\end{equation}
which shows that (ii) holds for $f$ being equal to $\nabla\varphi$
a.e.\medskip

(ii)$\Longrightarrow$(i). Assume that $\Phi=F_{f}$, where $f:$ $\mathbb{R}
^{d}\longrightarrow\mathbb{R}^{d}$ is a measurable selection of $\Phi$ that
satisfies \eqref{eq:P}. Then by Theorem~\ref{Isom}, there
exists a locally Lipschitz function $\mathbb{\varphi}:\mathbb{R}%
^{d}\longrightarrow\mathbb{R}$ such that $f(x)=\nabla\varphi(x),$ for a.e.
$x\in\mathbb{R}^{d}.$ Then it follows from Proposition~\ref{prop_filreg}(v)
and (\ref{aa1}), (\ref{aa2}) above that
\[
\partial\varphi(x)=F_{\nabla\varphi}(x)=F_{f}(x)=\Phi(x) \mbox{ for all } x\in \mathbb{R}^{d}.
\]
\end{proof}

\begin{rem}
(i) It is possible to have  $\Phi=F_{f}$, without $\Phi$ being a
subdifferential; consider for instance the function $f(x_{1},x_{2}
)=(x_{2},-x_{1}),$ for all $(x_{1},x_{2})\in\mathbb{R}^{2}$ (which obviously
fails \eqref{eq:P}). Then $\Phi=f$ cannot be a subdifferential.\medskip
\newline(ii) It is possible to have infinite many measurable selections
$f(x)\in\Phi(x),$ for all $x\in\mathbb{R}^{d}$, each of which satisfies the
nonsmooth Poincar\'{e} condition \eqref{eq:P}. Indeed, if we take $\Phi$ to be
identically equal to the closed ball $\bar{B}$ for all $x\in\mathcal{U},$ then
the set of all measurable selections that satisfy \eqref{eq:P} contains
isometrically the unit ball of the nonseparable Banach space $\ell^{\infty
}(\mathbb{N})$, see \cite{DF2019}.\medskip\newline(iii) If $\Phi=F_{f}$ and
$f$ is unique a.e. and satisfies \eqref{eq:P}, then  by
Theorem~\ref{th_Clarke}, $\Phi=\partial\varphi$ and $f=\nabla\varphi$ a.e. It
follows that the locally Lipschitz function $\varphi$ is unique up to a constant.
\end{rem}

\vspace{0.8cm}

\textbf{Acknowledgments.} A major part of this work was done during a research
visit of the first author to the University of Brest (2019) and of
the third author to the University of Chile (January 2020). These authors wish
to thank their hosts for hospitality and acknowledge financial support from
the Laboratoire de Math\'{e}matiques de Bretagne Atlantique and from FONDECYT
grant 1171854 respectively. The second author's research has been supported by
the grants CMM-AFB170001, FONDECYT 1171854 (Chile) and PGC2018-097960-B-C22,
MICINN (Spain) and ERDF (EU). The research of the first and the third author has been supported by the Air Force Office of Scientific Research under award number FA9550-18-1-0254.

\bigskip

\bigskip

\noindent Mira BIVAS\medskip

\noindent Faculty of Mathematics and Informatics, Sofia University\newline%
\noindent James Bourchier Boul. 5, 1126 Sofia, Bulgaria
\\and\\
Institute of Mathematics and Informatics, Bulgarian Academy of Sciences\\
G.Bonchev str., bl. 8, 1113 Sofia, Bulgaria

\smallskip

\noindent E-mail: \texttt{mira.bivas@math.bas.bg}\thinspace\ \thinspace
\ \newline\noindent \texttt{https://www.researchgate.net/profile/Mira\_Bivas}
\smallskip

\noindent Research supported by the Air Force Office of Scientific Research (FA9550-18-1-0254)

\bigskip

\noindent Aris DANIILIDIS\medskip

\noindent DIM--CMM, UMI CNRS 2807\newline Beauchef 851, FCFM, Universidad de
Chile \smallskip

\noindent E-mail: \texttt{arisd@dim.uchile.cl} \newline
\texttt{http://www.dim.uchile.cl/~arisd/}
\smallskip

\noindent Research supported by the grants: \newline CMM AFB170001, FONDECYT
1171854 (Chile), PGC2018-097960-B-C22 (Spain and EU).

\vspace{0.5cm}

\noindent Marc QUINCAMPOIX\medskip

\noindent Laboratoire de Math\'{e}matiques de Bretagne Atlantique (CNRS UMR
6205)\newline\noindent Univ  Brest, 6, avenue Victor Le Gorgen,
29200 Brest, France

\medskip

\noindent E-mail: \texttt{marc.quincampoix@univ-brest.fr}\newline%
\noindent\texttt{http://mespages.univ-brest.fr/\symbol{126}quincamp/}

\medskip

\noindent Research supported by the Air Force Office of Scientific Research (FA9550-18-1-0254)
\end{document}